\newcommand{\stkout}[1]{\ifmmode\text{\sout{\ensuremath{#1}}}\else\sout{#1}\fi}
\newtheorem{theorem}{Theorem}[section]
\newtheorem{lemma}[theorem]{Lemma}
\newtheorem{conjecture}[theorem]{Conjecture}
\DeclarePairedDelimiter{\parens}{(}{)}
\DeclarePairedDelimiter{\set}{\{}{\}}
\DeclarePairedDelimiter{\size}{|}{|}
\theoremstyle{definition}
\newtheorem{definition}[theorem]{Definition}
\newtheorem{observation}[theorem]{Observation}
\crefname{observation}{Observation}{Observations}
\newtheorem{problem}[theorem]{Problem}
\title[Almost-perfect colorful matchings in three-edge-colored bipartite graphs]{Almost-perfect colorful matchings in three--edge-colored bipartite graphs}
\author[S.Boyadzhiyska]{Simona Boyadzhiyska}
\address[SB]{HUN-REN Alfréd Rényi Institute of Mathematics, Budapest, Hungary.}
\email{simona@renyi.hu}
\author[M. Christoph]{Micha Christoph}
\address[MC]{Department of Mathematics, ETH Z\"urich}
\email{micha.christoph@math.ethz.ch}
\author[T. Szab\'o]{Tibor Szab\'o}
\address[TSz]{Institut f\"ur Mathematik, Freie Universit\"at Berlin, Berlin, Germany}
\email{szabo@mi.fu-berlin.de}
\date{\today}
\newcommand{\cal}{\mathcal}
\renewcommand{\tilde}{\widetilde}
\begin{document}

\begin{abstract}
    We prove that, for positive integers $n,a_1, a_2, a_3$ satisfying $a_1+a_2+a_3 = n-1$, it holds that any bipartite graph $G$ which is the union of three perfect matchings $M_1$, $M_2$, and $M_3$ on $2n$ vertices contains a matching $M$ such that $|M\cap M_i| =a_i$ for $i= 1,2,$ and $3$. The bound~$n-1$ on the sum is best possible in general. Our result verifies the multiplicity extension of the Ryser-Brualdi-Stein Conjecture, proposed recently by Anastos, Fabian, M\"uyesser, and Szab\'o, for three colors.
\end{abstract}

\maketitle

\section{Introduction}

An edge-colored graph is \emph{rainbow} if all of its edges have distinct colors. In 1967 Ryser~\cite{ryser1967neuere} (see also~\cite{best2018did}) conjectured that for odd $n$ it holds that in any proper $n$-coloring of the complete bipartite graph $K_{n,n}$ there is a rainbow matching of size $n$. This conjecture proved to be extremely influential in Combinatorics, motivating the development of newer and more sophisticated approaches employing a widening variety of tools. It is still open today, though significant progress has been made over the years.  

It is not hard to see that the statement of the conjecture does not hold for even $n$, though it has been subsequently conjectured that it only barely does not. More precisely, the Ryser-Brualdi-Stein Conjecture~\cite{brualdi1991combinatorial,ryser1967neuere,stein1975transversals} states that any proper edge-coloring of $K_{n,n}$ contains a rainbow matching of size~$n-1$.  The proof of this conjecture was announced recently by Montgomery~\cite{montgomery2023proof}, following earlier progress by Koksma~\cite{koksma1969}, Drake~\cite{drake1977}, Brouwer, de Vries, and Wieringa~\cite{brouweretal1978}, Woolbright~\cite{woolbright1978}, Hatami and Shor~\cite{hatamishor2008,shor1982}, and most recently by Keevash, Pokrovskiy, Sudakov, and Yepremyan~\cite{keevash2022ryser}. This takes us deceptively close to the full resolution of the conjecture of Ryser for odd $n$, though, according to~\cite{montgomery2023proof}, significant novel ideas will likely be necessary to find that last edge completing a rainbow perfect matching. 

Motivated by earlier work of Arman, R\"odl, and Sales~\cite{arman2023colorful}, Anastos, Fabian, M\"uyesser, and Szab\'o~\cite{anastos2023splitting} 
investigated what other matching structures, in addition to rainbow matchings, would be inevitable in a proper edge-coloring of $K_{n,n}$.
They conjectured an extension of Montgomery's Theorem, in which one seeks a matching of size $n-1$ containing a preset amount of edges from each color class. This was also raised as a question independently by Alon~\cite{Alon-question}.

\begin{conjecture}[Multiplicity Ryser-Brualdi-Stein Conjecture \cite{anastos2023splitting}]\label{con:AFMS}
Let $G$ be a complete bipartite graph on $2n$ vertices whose edge set is decomposed into perfect matchings $M_i$, $i=1,\ldots, n$. Let $a_1, \ldots , a_n\in \mathbb{N}_0$ be nonnegative integers such that $\sum_i a_i=n-1$. Then there exists a matching $M$ in $G$ such that $|M\cap M_i| = a_i$ for every $i\in\{1,\ldots, n\}$.
\end{conjecture}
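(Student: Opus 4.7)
The plan is to combine Montgomery's resolution of the Ryser-Brualdi-Stein conjecture with a switching layer that tunes color multiplicities. Montgomery's theorem produces a matching of size $n-1$ meeting $n-1$ color classes in one edge each, which corresponds to a multiplicity vector of the form $(1,1,\ldots,1,0)$ after permuting coordinates. The aim is to deform such a matching into one realizing the prescribed vector $(a_1,\ldots,a_n)$ via a sequence of local \emph{swaps}, each altering the current matching along a short alternating cycle in $M_i \cup M_j$ to trade one $M_i$-edge for one $M_j$-edge, changing the multiplicity vector by $-e_i+e_j$. Since any two nonnegative integer vectors in $\mathbb{N}_0^n$ with equal coordinate sum are connected by at most $n-1$ such moves, the conjecture reduces to arranging that all required swaps can be executed on a single initial matching.

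A natural implementation reserves, before invoking Montgomery's theorem, an \emph{absorber} subgraph $H \subset G$ designed so that every needed swap can be realized within $H$; Montgomery's theorem (or its proof method) is then applied to the remainder of $G$ to obtain the bulk of the matching, and $H$ handles the multiplicity corrections. For this to work, $H$ must contain, for every ordered pair $(i,j)$ of colors, a family of short alternating cycles in $M_i \cup M_j$ that are suitably positioned to realize the required $(i,j)$-swaps. A first step is to show probabilistically that a random reservation captures enough such cycles for all pairs simultaneously, using concentration and a union bound over the $\binom{n}{2}$ color pairs; a complementary quantitative version of Montgomery's theorem on the residual graph would then supply the rainbow backbone.

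The main obstacle is the \emph{composability} of swaps: each swap alters the current matching and thereby destroys or repurposes many of the cycles needed by subsequent swaps, yet up to $n-1$ swaps must be performed in sequence, and extreme targets such as $(n-1,0,\ldots,0)$ may require many swaps involving the same pair of colors and hence linearly many edges of a single class inside $H$. Designing an absorber robust to this compounding---while keeping it compatible in size and location with Montgomery's absorption machinery on the outside---is the crux. A plausible architecture is a hierarchy of nested switcher layers, one per color pair, each designed to function after arbitrary use of the others, built on the spread/absorber framework of Keevash-Pokrovskiy-Sudakov-Yepremyan and Montgomery. Arranging such a double-layered structure in an arbitrary proper edge coloring of $K_{n,n}$ appears to require genuine new ideas going substantially beyond the three-color techniques introduced in the present paper.
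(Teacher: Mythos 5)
You should first note that the statement you are addressing is stated in the paper as a \emph{conjecture} (the Multiplicity Ryser--Brualdi--Stein Conjecture of Anastos, Fabian, M\"uyesser, and Szab\'o); the paper does not prove it, and only establishes the special case of three nonzero color-multiplicities (Theorem~\ref{thm:main}), via the Reduction Lemma and the Switching Lemma. Your text is likewise not a proof but a research plan, and it contains genuine gaps that you yourself flag. The central one is exactly the step you call ``composability'': you never establish that, given the current matching of size $n-1$, a swap changing the multiplicity vector by $-e_i+e_j$ actually exists, let alone that up to $n-1$ such swaps can be chained. Note also that your proposed swap mechanism is too coarse as stated: an $M$-alternating cycle contained in $M_i\cup M_j$ of length $2k$ exchanges $k$ edges of $M_i$ for $k$ edges of $M_j$, so it changes the vector by $-ke_i+ke_j$, not by $-e_i+e_j$; since the matchings are disjoint there are no $2$-cycles, so single-unit swaps cannot in general be realized by cycles in $M_i\cup M_j$ at all. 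This is precisely why the paper's Switching Lemma (\cref{lem:main}), which performs a single unit swap among just three colors and only in the connected bipartite case, already requires the whole machinery of good, nearly-$M$-alternating paths, the functions $f_c$, and an intermediate-value argument through the two unsaturated vertices --- not short alternating cycles. Your absorber layer inherits the same unproved core: you assume a reserved subgraph $H$ containing, for every color pair, families of switching gadgets that remain usable after arbitrary earlier swaps, and you acknowledge that constructing such a structure inside an arbitrary proper edge-coloring of $K_{n,n}$, compatibly with Montgomery's absorption on the rest of the graph, is open. In addition, extreme targets such as $(n-1,0,\dots,0)$ show that the number of unit swaps needed from a rainbow matching can be $\Theta(n)$ and concentrated on a single pair of colors, so any reservation argument must supply linearly many reusable gadgets for one pair, which a straightforward union bound over pairs does not give.

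In short, the proposal identifies a plausible high-level architecture (rainbow backbone from Montgomery plus a multiplicity-tuning layer), but every load-bearing step --- existence of unit swaps relative to an evolving matching, robustness of the switching gadgets under repeated use, and the interface with Montgomery's or Keevash--Pokrovskiy--Sudakov--Yepremyan's absorption --- is asserted rather than proved. The paper's own contribution is much more modest in scope but complete: for three nonzero multiplicities it replaces the hypothetical absorber by an explicit augmenting-path analysis (Sections~\ref{sec:reduction} and~\ref{sec:switch}), and it explicitly states that already the case of four nonzero multiplicities appears to require new ideas; your plan does not yet close that gap.
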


By setting $a_i=1$ for all $i\in \{1,\ldots, n-1\}$ and $a_n=0$ in \cref{con:AFMS}, we obtain a  strengthening of the Ryser-Brualdi-Stein Conjecture.
If there is exactly one nonzero color-multiplicity $a_i=n-1$, then one can just take a subset of the corresponding perfect matching $M_i$. It is also easy to show that \cref{con:AFMS} holds when there are two nonzero color-multiplicities, that is, $a_i+a_j=n-1$. Indeed, the union of the perfect matchings $M_i$ and $M_j$ forms a $2$-factor consisting of even cycles, which alternate between the two matchings. To create $M$, we simply pick  edges from $M_i$ component by component, until in some component $C$ we reach the target number $a_i$ of edges designated for $M_i$. 
In $C$, we pick the necessary amount of $M_i$-edges in a consecutive fashion. Then, after potentially leaving unsaturated the two vertices incident to the first and last selected $M_i$-edges in $C$, we can saturate the remaining $2n-2a_i-2=2a_j$ vertices with $M_j$-edges. 

For three nonzero color-multiplicities $a_i, a_j, a_k$, Anastos, Fabian, M\"uyesser, and Szab\'o~\cite{anastos2023splitting} showed that a matching with the desired color-multiplicities exists provided that $a_i+a_j+a_k \leq n-2$. In fact, they showed the result in the more general setting where the three matchings need not form a bipartite graph (and also need not be disjoint); they also proved that in the general case the bound~$n-2$ on the sum $a_i+a_j+a_k$ is best possible (see~\cite[Remark 2]{anastos2023splitting}).

In the present paper we extend this to a full proof of the Multiplicity Ryser-Brualdi-Stein Conjecture for three nonzero color-multiplicities. The proof is elementary, using only augmenting paths, but turns out to be surprisingly delicate. 

\subsection{Main result}
To state our result we first introduce some convenient terminology.
Let $G$ be a graph which is the union of three disjoint perfect matchings $M_1$, $M_2$, and $M_3$.
For integers $a_1,a_2,a_3 \in \mathbb{N}_0$, 
a matching $M$ of $G$ is called an \emph{$(a_1,a_2,a_3)$-matching} if $|M\cap M_i| =a_i$ for every $i\in \{1,2,3\}$. 
Our main result states that in a bipartite graph a matching of size $n-1$ can be found for any color-multiplicity triple summing up to $n-1$. 

\begin{theorem}\label{thm:main}
    Let $G$ be a bipartite graph on $2n$ vertices which is the union of three disjoint perfect matchings $M_1$, $M_2$, and $M_3$.
    Then, for any integers $a_1,a_2,a_3\in \mathbb{N}_0$ satisfying  $a_1 + a_2 + a_3 = n-1$, the graph $G$ contains an $(a_1,a_2,a_3)$-matching.
\end{theorem}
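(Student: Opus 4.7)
The natural approach is to reduce to the already-known $a_1+a_2+a_3 \le n-2$ case of Anastos, Fabian, Müyesser, and Szabó and then perform a single, color-controlled augmentation to reach size $n-1$. Permuting colors, I may assume without loss of generality that $a_3 \ge 1$; if two of $a_1,a_2,a_3$ are zero the statement is immediate, and if exactly one is zero the problem reduces to the two-color case handled in the introduction, so I further assume $a_1,a_2,a_3 \ge 1$.

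\textbf{Step 1 (Base matching).} First I would apply the Anastos-Fabian-Müyesser-Szabó theorem to $(a_1,a_2,a_3-1)$, whose sum is $n-2$, obtaining a matching $M_0$ of size $n-2$ with $|M_0\cap M_i| = a_i$ for $i\in\{1,2\}$ and $|M_0\cap M_3| = a_3-1$. Since $G$ is bipartite with parts $A,B$ of size $n$, exactly two vertices of each part are uncovered; call them $u_1,u_2 \in A$ and $v_1,v_2 \in B$.

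\textbf{Step 2 (Augmenting structure).} The plan is then to find an $M_0$-alternating path $P$ from $\{u_1,u_2\}$ to $\{v_1,v_2\}$ whose \emph{color profile} $\bigl(|P_{\mathrm{new}}\cap M_i|-|P_{\mathrm{old}}\cap M_i|\bigr)_{i=1,2,3}$ equals $(0,0,+1)$, so that $M_0\triangle E(P)$ is the desired $(a_1,a_2,a_3)$-matching. The simplest candidate is a length-$1$ path, namely an $M_3$-edge directly joining the two uncovered pairs. When none exists, one analyzes longer alternating paths. Because consecutive edges of a path have distinct colors (the $M_i$ are matchings), a direct calculation rules out length-$3$ paths with profile $(0,0,+1)$, but length-$5$ paths do work: their color sequence must be either $M_1M_2M_3M_1M_2$ or $M_2M_1M_3M_2M_1$ (or their reverses). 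A useful observation is that, once the color sequence is fixed, the alternating path is \emph{forced} (each color prescribes a unique neighbor), so existence reduces to checking that this forced walk does not self-intersect and terminates in $\{v_1,v_2\}$.

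\textbf{Step 3 and main obstacle.} The delicate core is showing that such a path (of length $1$, $5$, or longer) always exists, or else that $M_0$ can be \emph{preconditioned} by flipping an $M_0$-alternating cycle of color profile $(0,0,0)$ in order to expose one. I would organize the case analysis around (a) the $M_3$-partners of the four uncovered vertices; (b) the components of the $2$-factor $M_1\cup M_2$ containing them; and (c) whether the relevant $M_3$-edges lie inside a single $M_1\cup M_2$-cycle or cross between cycles. Bipartiteness enters crucially here, as $M_1\cup M_2$, $M_1\cup M_3$, and $M_2\cup M_3$ are each disjoint unions of \emph{even} cycles, which constrains both the parities of admissible augmenting paths and the available color profiles of short alternating cycles. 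I expect the main difficulty to be ruling out genuinely ``stuck'' configurations in which no direct length-$5$ path of the correct profile exists and no useful preconditioning cycle is available; each such potential obstruction is local around $\{u_1,u_2,v_1,v_2\}$, but producing a clean structural dichotomy handling every case uniformly is, as the authors warn, ``surprisingly delicate,'' and this is where the bulk of the work will lie.
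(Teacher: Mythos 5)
Your Step 3 is not a proof but an announcement of where the proof would have to go, and that is precisely where the actual content of the theorem lies; as written the argument has a genuine gap. Concretely, after obtaining the size-$(n-2)$ matching $M_0$ from the $\le n-2$ result of Anastos, Fabian, M\"uyesser, and Szab\'o, you need an $M_0$-augmenting path with color profile $(0,0,+1)$, and you offer only the length-$1$ and length-$5$ patterns plus an unspecified ``preconditioning'' by profile-$(0,0,0)$ alternating cycles, with the hope that all obstructions are local around the four uncovered vertices. Nothing supports that hope, and the paper's proof indicates it is false in spirit: there the augmenting structure is global, not local. One must first replace the matching by an extremal one (chosen to minimize the number of $M_3$-edges on a connecting alternating path, after possibly re-distributing $M_1$/$M_2$-edges between the cycle through the uncovered vertices and other $M$-alternating cycles of $M_1\cup M_2$), and the path ultimately used runs through the cycle $C_0(M)$ containing both uncovered vertices, across several cycles of $\mathcal{C}(M)$, and around a long $(M_{\bar c},M)$-alternating cycle containing an $M_3$-edge of the matching. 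Moreover, for long paths the color counts change in an uncontrolled way, which is why the paper needs the step-wise switching lemma and an intermediate-value-type interpolation along ``good, nearly-alternating'' paths; your proposal contains no mechanism playing this role, and profile-$(0,0,0)$ cycle flips alone do not provide it (the paper's preconditioning flips cycles whose individual profiles are \emph{not} zero and compensates between different cycles).

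It is also worth noting that your overall architecture differs from the paper's: you go top-down (known result for sum $n-2$, then one augmentation that increases the size), whereas the paper never changes the size at all --- it starts from a trivial $(0,0,n-1)$-matching and repeatedly trades one color for another at fixed size $n-1$ (the Switching Lemma), plus a separate Reduction Lemma for disconnected graphs. Your reduction would be attractive if the single color-controlled augmentation could be carried out, but establishing it appears to require essentially the same machinery (extremal choice of the matching, long good alternating paths, and interpolation of color counts) that constitutes the bulk of the paper, so the proposal as it stands does not shortcut or complete the argument.
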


As shown in~\cite[Proposition~1]{anastos2023splitting}, \cref{thm:main} is best possible: for any $n$ and $0\leq a_1,a_2,a_3\leq n-1$ summing up to $n$, there exists a bipartite graph on $2n$  vertices which is the disjoint union of three perfect matchings and has  no $(a_1,a_2,a_3)$-matching.
Note also that, by K\"onig's Theorem, any collection  of $k$ pairwise disjoint perfect matchings of $K_{n,n}$ can be extended to a collection of $n$ pairwise disjoint perfect matchings.
Therefore, the existence of an $(a_1,\ldots,a_n)$-matching in the Multiplicity Ryser-Brualdi-Stein Conjecture with three nonzero color-multiplicities $a_i, a_j, a_k$ summing up to $n-1$ is equivalent 
to the existence of an $(a_i,a_j, a_k)$-matching in \cref{thm:main}. 
As discussed earlier, the analog of \cref{thm:main} is false if we remove the assumption that $G$ is bipartite; however, it is possible that the conclusion of the theorem holds under a weaker assumption about the structure of $G$. We discuss this in more detail in the concluding remarks.

\subsection{Terminology and notation} 
We say that a vertex $v$ is \emph{saturated} by a matching $M$ if $M$ contains an edge incident to $v$; otherwise $v$ is \emph{unsaturated}; if we want to explicitly specify the matching $M$ we sometimes call a vertex $M$-saturated or $M$-unsaturated. If $v$ is saturated by  $M$, we write $M(v)$ for the \emph{matching partner} of $v$ in $M$, that is, the unique vertex $w$ such that $vw\in M$. 
Let $G$ be a graph which is the union of three disjoint perfect matchings $M_1$, $M_2$ and $M_3$.  For a matching~$M$ in~$G$, we write  $a_i(M) = \size{M\cap M_i}$ for all $i\in [3]$. 
Let $P = (v_1,\dots,v_\ell)$ be a path in~$G$. For $1\leq i\leq j\leq\ell$, we write $P_{i,j}$ for the sub-path from $v_i$ to $v_j$, that is, $P_{i,j}=(v_i,v_{i+1},\dots,v_j)$. For a matching $M$, the path $P$ is \emph{$M$-alternating} if $P$ alternates between edges in $M$ and edges not in~$M$. Given two subgraphs $F$ and $F'$, the path $P$ is \emph{$(F,F')$-alternating} if $P$ alternates between edges of~$F$ and edges of~$F'$. We define $M$- and $(F,F')$-alternating cycles similarly. Given paths $P$ and $P'$ such that the last vertex of $P$ is the first vertex of $P'$, we write $P\star P'$ for the path obtained by concatenating the two paths.

\subsection{Organization of the paper} In the next section, we introduce the two main ingredients of our proof, which we call the Reduction Lemma and the Switching Lemma, and deduce Theorem~\ref{thm:main} from them. Then, we prove the Reduction Lemma in Section~\ref{sec:reduction} and the Switching Lemma in Section~\ref{sec:switch}. Finally, we propose several different avenues to extend our result in the last section.

\section{Proof of main result}\label{sec:main-proof}

The goal of this section is to prove \cref{thm:main}. The proof consists of two main ingredients. The first is a reduction to the connected case.
While in many theorems of graph theory it is simple to extend the statement to all graphs once it is known for all \emph{connected} graphs, in our setting this step requires a separate and nontrivial argument. The issue is that looking for a matching of size $n-1$ in a disconnected graph $G$ means that we need a perfect matching in all but one component. The next lemma guarantees that we can find such perfect matchings satisfying the given requirements. We will prove \cref{lem:reduction} in \cref{sec:reduction}. 

\begin{lemma}[Reduction Lemma]\label{lem:reduction}
    Let $F$ be a bipartite graph on $2m$ vertices which is the union of three disjoint perfect matchings $M_1,M_2$, and $M_3$. Let ${b_1,b_2,b_3\in \mathbb{N}_0}$ be integers such that $b_1+b_2+b_3\geq 2m-1$. Then, there exists a perfect matching in $F$ with at most $b_i$ edges of $M_i$ for each $i\in\{1,2,3\}$.
\end{lemma}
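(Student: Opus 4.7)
My plan is to prove the Reduction Lemma by induction on $m$, with the base case $m = 0$ vacuous. For the inductive step, I would first reduce to the case where $F$ is connected. If $F$ has components $F_1, \dots, F_c$ of sizes $2m_1, \dots, 2m_c$ with $c \geq 2$, the color budgets can be distributed across components: the total slack $\sum_i b_i - \sum_k(2m_k - 1) \geq (2m-1)-(2m-c) = c-1 \geq 1$ ensures one can choose integer triples $(b_1^{(k)}, b_2^{(k)}, b_3^{(k)})$ for each $F_k$ satisfying $\sum_j b_j^{(k)} \geq 2m_k-1$ and $\sum_k b_j^{(k)} \leq b_j$, and the inductive hypothesis then produces the desired perfect matching in each component.

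So assume $F$ is connected, and by symmetry reorder colors so that $b_1 \leq b_2 \leq b_3$. If $b_3 \geq m$ then $M = M_3$ gives the color vector $(0,0,m)$, which satisfies all the constraints. Otherwise $b_i \leq m-1$ for all $i$, and combined with $\sum_i b_i \geq 2m-1$ this forces $b_i \geq 1$ and $b_i + b_j \geq m$ for every pair. In this regime I would first try to realize a perfect matching lying entirely inside one of the three 2-factors $M_i \cup M_j$. Such a matching is specified by choosing, for each cycle of $M_i \cup M_j$, whether to take its $M_i$-edges or its $M_j$-edges, producing the color count $a_i = \sum_{k \in S} \ell_k$, where $\ell_1, \dots, \ell_s$ are the cycle half-lengths and $S \subseteq \{1, \dots, s\}$. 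If for some pair $(i,j)$ the subset sums of these half-lengths meet the target interval $[m-b_j, b_i]$ (whose non-negative length is $b_i+b_j-m$), we are done.

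The main obstacle, which I expect to absorb the bulk of the work, is the case when all three pair-based subset-sum problems fail. This is very restrictive: each 2-factor $M_i \cup M_j$ must contain a dominant cycle whose half-length exceeds the width of the corresponding target interval, essentially forcing each 2-factor to be close to Hamiltonian. In such configurations one must involve all three colors: starting with $M = M_3$ and performing swaps along $M$-alternating cycles in $F$ -- each of which alternates $M_3$-edges with edges from $M_1 \cup M_2$ -- moves the color vector from $(0, 0, m)$ to $(k_1, k_2, m - k_1 - k_2)$, where $k_1, k_2$ count the $M_1$- and $M_2$-edges along the swapped cycles. The task then reduces to finding a union of edge-disjoint such alternating cycles -- equivalently, an even-degree subgraph of the contracted multigraph $F/M_3$ on $m$ super-vertices -- with $k_1 \leq b_1$, $k_2 \leq b_2$, and $k_1 + k_2 \geq m - b_3$. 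The budget inequality $b_1 + b_2 \geq m - b_3$ makes this combinatorially feasible, but exhibiting such a subgraph in the presence of dominant cycles will likely demand a detailed case analysis of how the third color threads the dominant cycles on the other two 2-factors; this is where I expect the delicate augmenting-path reasoning alluded to in the introduction to come in, and I would attempt to close it either by a nested inductive argument on the contracted multigraph or by a direct structural exchange.
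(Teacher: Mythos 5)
Your preliminary reductions are fine (the budget-splitting over components works, and so does the observation that one may assume $b_i\leq m-1$ for all $i$, hence $b_i+b_j\geq m$ for every pair), but the proof has a genuine gap exactly where you say you expect the bulk of the work to be: the case in which no perfect matching inside a single two-colored $2$-factor meets the budgets is never actually resolved. Your proposed reformulation --- start from $M_3$ and find vertex-disjoint $M_3$-alternating cycles, i.e.\ a suitable subgraph of the contracted multigraph $F/M_3$, with color counts $k_1\leq b_1$, $k_2\leq b_2$, $k_1+k_2\geq m-b_3$ --- is only a restatement of the problem; the inequality $b_1+b_2\geq m-b_3$ is a necessary counting condition, not an argument that such a family of alternating cycles exists, and no mechanism is offered for producing one (note also that what you need is a subgraph of $F/M_3$ with all degrees $0$ or $2$, not merely even degrees). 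The simplest instance of the hard case already shows the issue: if $M_1\cup M_2$ is a single Hamiltonian cycle and $b_1,b_2,b_3\leq m-1$, the subset-sum approach gives only the color counts $0$ and $m$ in that $2$-factor, and your sketch gives no concrete way to thread $M_3$-edges into a valid perfect matching with the right counts.

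For comparison, the paper closes exactly this case with a different and quite short device, via a strengthened statement proved by induction on the number of cycles of $M_1\cup M_2$ not yet covered. Assuming without loss of generality $b_1+b_2\geq m$, in the single-cycle case one builds a partial matching $M'$ on the cycle $C=(v_1,\dots,v_{2m'})$ by taking the first $m'-b_1'-1$ edges of $M_2$ and the last $m'-b_2'-1$ edges of $M_1$ along $C$, leaving a ``gap'' of unsaturated vertices; then one takes a \emph{longest} $(M_3,M')$-alternating path $P$ from $v_1$. Its other endpoint is automatically an $M'$-unsaturated vertex $v_k$ inside the gap, the gap's position forces $|M''\cap M_1|\leq b_1$ and $|M''\cap M_2|\leq b_2$ for the matching $M''$ completing $C$ around $v_1,v_k$, and a counting estimate $|P\cap M_3|\leq |M'|+1\leq 2m-1-b_1-b_2\leq b_3$ shows the symmetric difference $M''\Delta P$ is a perfect matching within all three budgets. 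The multi-cycle case is then trivial (a shortest cycle can be filled with $M_1$-edges). So the alternating-path idea you gesture at is indeed the right tool, but the crucial construction --- where to leave the two unsaturated vertices and why a single longest alternating path repairs them within the $M_3$-budget --- is missing from your proposal, and without it the argument does not go through.
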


The second main ingredient is a ``color-switching'' lemma for connected graphs, showing that, for any matching of size $n-1$, we can find another matching of size $n-1$ with one more edge of color~$i$ and one fewer edge of color $j$ (provided there is at least one edge of color $j$). 

\begin{lemma}[Switching Lemma]\label{lem:main}
    Let $G$ be a connected bipartite graph on $2n$ vertices which is the union of three disjoint perfect matchings $M_1$, $M_2$, and $M_3$. Let $a_1,a_2,a_3\in \mathbb{N}_0$ be integers satisfying $a_3\geq 1$ and $a_1 + a_2 + a_3 = n-1$ and suppose that $G$ contains an $(a_1,a_2,a_3)$-matching.
    Then $G$ also contains an $(a_1+1,a_2,a_3-1)$-matching and an $(a_1, a_2+1, a_3-1)$-matching.
\end{lemma}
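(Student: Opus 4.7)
My plan is to prove the existence of an $(a_1+1,a_2,a_3-1)$-matching; the other case will follow by swapping the roles of $M_1$ and $M_2$. Let $u,v$ be the two vertices unsaturated by $M$. The strategy is to find an $M$-alternating path $P$ in $G$ starting at $u$ (or $v$) and ending at an $M$-saturated vertex $x$, such that the ``swap'' $M':=(M\setminus P)\cup(P\setminus M)$ has color profile $(a_1+1,a_2,a_3-1)$. Concretely, I need the $M_1$-count along $P\setminus M$ to exceed that along $P\cap M$ by one, the $M_2$-counts to be equal, and the $M_3$-count along $P\cap M$ to exceed that along $P\setminus M$ by one; then $u$ becomes saturated, $x$ becomes unsaturated, and the net profile change is as required.

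I would first dispose of short cases. If $uv\in M_1$, then removing any edge of $M\cap M_3$ and adding $uv$ produces the desired matching. If the $M_1$-partner $u_1$ of $u$ is incident to some $M_3$-edge $u_1y\in M$, then the length-two swap that removes $u_1 y$ and adds $uu_1$ works, with the symmetric statement for $v$. Under the assumption that none of these easy swaps are available, $M_1(u)$ and $M_1(v)$ must both be $M$-saturated via $M_2$-edges: they cannot be $M$-saturated via $M_1$ lest $u$ or $v$ be $M$-saturated, and they cannot be $M$-unsaturated unless $uv\in M_1$, which we excluded.

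To find longer $P$, I plan to iteratively grow $M$-alternating walks from $u$ and from $v$, tracking the color of the last edge used and the cumulative color-profile change. A pivotal structural observation is that consecutive edges of an $M$-alternating path meet at a vertex and must therefore come from distinct $M_i$'s (since each $M_i$ is a perfect matching, each vertex has exactly one edge of each color); hence the color sequence along the path is a walk on the complete graph $K_3$ on $\{1,2,3\}$, with the additional constraint that $M$-membership alternates between consecutive edges. This ties together path length, color sequence, and achievable profile change rather rigidly.

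The main obstacle, and the source of the delicacy, is to show that some alternating path with profile change $(+1,0,-1)$ always reaches an endpoint of an $M_3$-edge of $M$. I plan to proceed by contradiction: assume no such path exists, then perform an alternating BFS-type exploration from $u$ and from $v$, keeping track at each reached vertex of the color of the last edge used. The failure to find a valid endpoint should impose strong closure properties on the explored set under color-swaps; combining these with the connectedness of $G$ and analyzing components of the auxiliary $2$-factors $M_i\cup M_j$ should yield a contradiction, possibly only after considering a combined two-sided path whose endpoints are $u$ and $v$ and whose aggregate swap achieves the desired color shift. The intricate case analysis over all color walks on $K_3$ consistent with the $M$-alternation and the prescribed target profile is what I expect to make the argument delicate, rather than any single conceptual leap.
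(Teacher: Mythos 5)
Your setup (augment along an $M$-alternating path whose swap has net color profile $(+1,0,-1)$) and your easy cases are fine, but the proposal stops exactly where the real work begins. The entire difficulty of the lemma is to produce a swap whose profile change is \emph{exactly} $(+1,0,-1)$, and your plan for this --- a BFS-type alternating exploration from $u$ and $v$ with ``closure properties'' leading to a contradiction --- is a hope, not an argument. Reaching an endpoint of an $M_3$-edge of $M$ by an alternating path is the comparatively easy part (it follows from $3$-regularity and connectivity, as in \cref{lem: alternating path_n}); the hard part is controlling the $M_1$/$M_2$/$M_3$ bookkeeping of the resulting exchange so that the $M_2$-count is preserved and the $M_3$-count drops by exactly one, and nothing in your outline addresses how the exploration would guarantee this rather than overshooting or undershooting. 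The paper needs substantial machinery precisely for this: good, nearly-$M$-alternating paths, the one-step modification lemma (\cref{lem: can move_n}), and a discrete intermediate-value interpolation between matchings (\cref{lem: intermediate value_n}) to land on the exact count $a_3-1$.

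There is also a structural gap in the framework itself: you fix the matching $M$ and search for a single alternating path from an unsaturated vertex to a saturated one. The paper's proof does not (and apparently cannot) work this way: when no good path is available it \emph{re-chooses the matching}, rotating $M$ around cycles of $M_1\cup M_2$ (via \cref{obs: matchings of a cycle_n}, in the extremal choice of the pair $(M,P_0)$ in \cref{lem:existence P_0}) and later replacing $M$ by $M\Delta C$ for an $(M_{\bar c},M)$-alternating cycle $C$; it then concatenates several pieces into one long good path and walks the two unsaturated endpoints along it step by step. A target $(a_1+1,a_2,a_3-1)$-matching need not differ from your fixed $M$ by a single alternating path at all (the symmetric difference can have several components), so without an argument that your fixed-$M$, single-path search always succeeds --- which you do not give, and which the paper's detours suggest is false or at least far from obvious --- the proposal does not constitute a proof. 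Your observation that consecutive path edges carry distinct colors is correct but is nowhere near strong enough to carry the case analysis you defer to it.
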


The proof of \cref{lem:main} is where the bulk of the work lies and will be given in \cref{sec:switch}.
We now use \cref{lem:reduction,lem:main} to deduce \cref{thm:main}.

\begin{proof}[Proof of \cref{thm:main}]
We prove the statement by induction on the number of connected components of $G$.
For the base case assume 
that $G$ is connected.
We can then apply \cref{lem:main} successively $a_1$ times in the first coordinate to a $(0,0,n-1)$-matching of $G$ (any $n-1$ edges of $M_3$)
and obtain an $(a_1,0,a_2+a_3)$-matching.  
Then, applying \cref{lem:main} $a_2$ times in the second coordinate,
we find the desired $(a_1,a_2,a_3)$-matching of $G$.

For the induction step, let $G$ be disconnected. Then there exists a component $F$ of $G$ with at most $n$ vertices. Note that the matchings $M_1, M_2$, and $M_3$ are perfect matchings in $F$ as well. Thus, $F$ is a balanced bipartite graph on $2m$ vertices for some  $m\leq \frac{n}{2}$. Now, setting $b_i = a_i$ for all $i\in \{1,2,3\}$, we have $b_1+b_2+b_3 = n-1\geq 2m-1$. Hence \cref{lem:reduction} implies that there is a perfect matching $M_F$ in $F$ using  $b'_i\leq b_i$ edges of $M_i$ for each $i\in [3]$. Note that $b_1'+b_2'+b_3' = m$.

Now consider the graph $G' = G-V(F)$. This is a bipartite graph on $2(n-m)$ vertices that is the union of three disjoint perfect matchings. We have $b_i-b_i'\geq 0$ for each $i\in \{1,2,3\}$ and $(b_1-b_1')+(b_2-b_2')+(b_3-b_3') = n-m-1$. Since $G'$ has fewer connected components than~$G$, it follows by the induction hypothesis that $G'$ contains a  $(b_1-b_1',b_2-b_2',b_3-b_3')$-matching $M_{G'}$. Then, $M_F\cup M_{G'}$ is an $(a_1,a_2,a_3)$-matching in $G$, as desired. 
\end{proof}

\section{Reduction Lemma}\label{sec:reduction}

In this section, we prove \cref{lem:reduction} by proving the following stronger result. 
    \begin{lemma}\label{lem:reduction-strengthening}
        Let $F$ be a bipartite graph on $2m$ vertices  which is the union of three disjoint perfect matchings $M_1,M_2$, and $M_3$. Let $b_1,b_2,b_3\in \mathbb{N}_0$ be integers such that $b_1+b_2+b_3\geq 2m-1$ and $b_1+b_2\geq m$. Let $M\subseteq M_1\cup M_2$ be a matching such that $(M_1\cup M_2)-V(M)$ is a disjoint union of cycles, each of length at least two, and $|M\cap M_i|\leq b_i$ for $i\in\{1,2\}$. Then, $M$ can be extended to a perfect matching in $F$ with at most $b_i$ edges of $M_i$ for $i\in\{1,2,3\}$.
    \end{lemma}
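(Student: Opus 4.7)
The plan is to prove \cref{lem:reduction-strengthening} by induction on $L := m - |M|$, which equals both the total half-length of the cycles in $(M_1\cup M_2)-V(M)$ and the number of edges that need to be added to extend $M$ to a perfect matching. Let the residual cycles be $C_1,\dots,C_k$ with half-lengths $\ell_1,\dots,\ell_k$, and write $c_i := b_i-|M\cap M_i|\geq 0$ for $i\in\{1,2\}$. The hypothesis $b_1+b_2\geq m$ translates to $c_1+c_2\geq L$, while $b_1+b_2+b_3\geq 2m-1$ gives a lower bound on the $M_3$-budget available for the extension. The base case $L=0$ is immediate: $M$ is already a perfect matching and the hypothesis supplies the color bounds.

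For the inductive step, the natural split is by whether some cycle fits monochromatically within the residual color budgets. \emph{If some $\ell_j \leq \max(c_1,c_2)$}, say $\ell_j\leq c_1$, I would set $M':=M\cup(M_1\cap E(C_j))$; then $M'$ satisfies all the color bounds, $(M_1\cup M_2)-V(M')$ is the disjoint union of the remaining $k-1$ cycles, and $L$ strictly decreases, so the induction hypothesis applies. \emph{Otherwise}, every $\ell_j>\max(c_1,c_2)$; combining this with $\sum\ell_j\leq c_1+c_2\leq 2\max(c_1,c_2)$ forces $k=1$, and we face a single cycle $C$ of half-length $\ell$ with $\max(c_1,c_2)<\ell\leq c_1+c_2$. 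Any purely $M_1$- or $M_2$-colored extension on $C$ would use $\ell$ edges of a single color, exceeding both budgets, so the extension must include an $M_3$-edge $uv$ with both endpoints on $C$ --- a \emph{chord} of $C$. Such a chord splits $C$ into two paths of even length, each of which admits a unique monochromatic perfect matching (taking every other edge). Because $u$ and $v$ lie on opposite sides of the bipartition, these two path-matchings are forced to use \emph{different} colors, yielding an extension that uses $p$ new $M_1$-edges and $\ell-1-p$ new $M_2$-edges (for some $p$ determined by the chord's position) together with the chord itself. The target interval $[\ell-1-c_2,\,c_1]$ for $p$ is nonempty since $c_1+c_2\geq \ell$.

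The main obstacle is this final step: producing an $M_3$-chord of $C$ whose split lies in the target interval. This is where the hypothesis $b_1+b_2\geq m$ must play a structural role, as it should preclude degenerate configurations in which too many $M_3$-partners of $V(C)$ sit in $V(M)$ and no suitable chord is available. I would first count the $M_3$-partners of $V(C)$ and use the $M_1\cup M_2$-cycle structure of $V(M)$ to force at least one chord into $V(C)$; then a discrete intermediate-value argument along the cyclic positions of chord endpoints should show that some chord achieves a valid $p$. If no single chord suffices, the $M_3$-budget $b_3\geq 2m-1-b_1-b_2$ permits the use of several chords, breaking $C$ into more pieces and affording finer control over the color distribution. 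Reconciling chord positions, bipartition parities, and the color budgets is where the technical heart of the proof lies.
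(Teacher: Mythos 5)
Your reduction to a single residual cycle is sound and is essentially the paper's own first step (the paper inducts on the number of cycles, filling a shortest cycle monochromatically with $M_1$-edges; your $\max(c_1,c_2)$ variant works just as well). The genuine gap is in the single-cycle case, and it is not merely the technical loose end you flag: the structural assumption underlying your plan --- that a valid completion must use an $M_3$-chord of $C$ and that $b_1+b_2\geq m$ prevents all $M_3$-partners of $V(C)$ from sitting in $V(M)$ --- is simply false. Take $m=4$ with sides $\{x_1,\dots,x_4\}$, $\{y_1,\dots,y_4\}$, $M_1=\{x_1y_1,x_2y_2,x_3y_3,x_4y_4\}$, $M_2=\{x_1y_2,x_2y_1,x_3y_4,x_4y_3\}$, $M_3=\{x_1y_3,x_3y_1,x_2y_4,x_4y_2\}$, $M=\{x_3y_3,x_4y_4\}$ and $(b_1,b_2,b_3)=(3,1,3)$. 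All hypotheses hold ($b_1+b_2=4=m$, $b_1+b_2+b_3=7=2m-1$, $|M\cap M_1|=2\leq b_1$), the residual graph is the single $4$-cycle $C$ on $\{x_1,y_1,x_2,y_2\}$, yet every $M_3$-partner of $V(C)$ lies in $V(M)$, so $C$ has no chord at all; moreover the only supersets of $M$ that are perfect matchings use either both $M_1$-edges of $C$ (exceeding $b_1$) or both $M_2$-edges (exceeding $b_2$). So no amount of intermediate-value or multi-chord bookkeeping can rescue the approach: in this configuration even the literal ``add edges on top of $M$'' formulation you are aiming at has no solution, and what one must prove (and what is actually used later) is the existence of \emph{some} perfect matching within the budgets, not necessarily one containing $M$.

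The paper's proof supplies exactly the idea your proposal is missing: it does not confine itself to edges inside $C$. It first places a carefully positioned partial matching $M'\supseteq M$ on $C$, consisting of a block of $M_2$-edges at one end and a block of $M_1$-edges at the other, so that a contiguous ``window'' of vertices of $C$ remains unsaturated and, wherever the process ends inside this window, the final $M_1$- and $M_2$-counts stay within $b_1$ and $b_2$. It then takes a longest $(M_3,M')$-alternating path $P$ starting at the unsaturated vertex $v_1$; this path is allowed to leave $C$ and travel through vertices saturated by $M$, and since $M_3$ is perfect its other endpoint must be another unsaturated vertex $v_k$ of the window. The final matching is $M''\Delta P$ (where $M''$ completes $C$ except at $v_1,v_k$), which swaps some $M$-edges for $M_3$-edges along $P$; the key count is $|P\cap M_3|\leq |M'|+1\leq 2m-1-b_1-b_2\leq b_3$, which is precisely where $b_1+b_2+b_3\geq 2m-1$ enters. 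In my example this produces, e.g., $\{x_2y_2,x_4y_4,x_1y_3,x_3y_1\}$, which meets all budgets but drops an edge of $M$ --- illustrating why the alternating-path augmentation through the saturated part, rather than a chord of $C$, is unavoidable.
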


Before proving \cref{lem:reduction-strengthening}, let us briefly explain how \cref{lem:reduction} follows. As $b_1+b_2+b_3\geq 2m-1$, we may assume without loss of generality that $b_1+b_2\geq m$. Since the union of two disjoint perfect matchings is a disjoint union of cycles, taking $M$ to be the empty matching in \cref{lem:reduction-strengthening} immediately gives \cref{lem:reduction}.

    \begin{proof}[Proof of \cref{lem:reduction-strengthening}]
        We proceed by induction on the number of cycles in $(M_1\cup M_2)-V(M)$. The majority of the work is in the base case, when there is a single cycle (if there are no cycles, then~$M$ is already a perfect matching). We first introduce some notation.
        Let $b_1' = b_1-|M\cap M_1|$ and $b_2' = b_2-|M\cap M_2|$ and, without loss of generality, assume $b'_1\geq b'_2$. Let $m'= m-|M|$ and note that there are $2m'$ unsaturated vertices. We write $C$ for a shortest cycle in $(M_1\cup M_2) - V(M)$ and denote its length by $2\ell$.
        Note that 
        \begin{align}\label{eq:b_1'b_2'}
            2b_1' \geq b_1'+b_2'=b_1-|M\cap M_1|+b_2-|M\cap M_2| = b_1+b_2-|M|\geq m - (m-m')=m'.
        \end{align}

        First, suppose that $(M_1\cup M_2)-V(M)$ contains a single cycle, that is, $\ell = m'$. If $b_1' \geq m'$, we can extend $M$ by simply adding the $m'$ edges in $C\cap M_1$ to $M$ to obtain the required matching. Assume then that $b_1' < m'$. Write $C = (v_1, \dots, v_{2m'}, v_1)$. Without loss of generality, assume that $v_1v_2\in M_1$ and $v_{2m'}v_1\in M_2$.
        Now let $M'$ be the matching obtained from $M$ by adding the edges $v_{2i}v_{2i+1}$ for $i\in [m'-b_1'-1]$, that is, the first $m'-b_1'-1$ edges of $M_2$ appearing on $C$, and the edges
        $v_{2(m'-j)-1}v_{2(m'-j)}$ for $0\leq j\leq m'-b_2'-2$, that is, the last $m'-b_2'-1$ edges of $M_1$ appearing on~$C$. 
        Note that $2(m'-b_1'-1)+1 < 2(m'- (m' - b_2'-2))-1$, since $m'-2 < b_1'+ b_2'$ by~\eqref{eq:b_1'b_2'}, so~$M'$ is indeed a matching. 

        Consider now a longest $(M_3, M')$-alternating path $P$ starting from $v_1$. Note that $v_1$ is unsaturated in $M'$, so the first edge of $P$ is in $M_3$; since $M_3$ is a perfect matching, the last edge of $P$ must also belong to $M_3$, and therefore in particular the length of $P$ is odd. Suppose the other endpoint of $P$ is~$v_k$ and note that $v_k$ is also not saturated by $M'$. Therefore, $k\in \set{2(m'-b_1'-1)+2, \dots, 2(b_2'+2)-2}$. Further, since the length of $P$ is odd and $G$ is bipartite, $k$ must be even. Hence, $v_{k-1}v_k\in M_1$ and $v_kv_{k+1}\in M_2$. We also have
        \begin{align}
            |P\cap M_3|\leq|M'|+1  
            &= 2m'- (b_1'+b_2') - 2 +|M|+1\notag \\
            &= 2m'+2|M|-1-b_1-b_2 =2m-1-b_1-b_2 \leq b_3,\label{eq:P cap M_3}
        \end{align}
        where for the second equality we used the definition of $b_1'$ and $b_2'$ and for the last equality the definition of $m'$.

        Define $M''\supseteq M'$ to be the matching obtained from $M$ by adding all $M_2$-edges on the path $(v_1,\dots, v_k)$ and all $M_1$-edges on the path $(v_k,\dots, v_{2m'}, v_1)$. Then, $M''$ saturates all vertices of $C$ except for $v_1$ and $v_k$. Since $k\leq 2(b_2'+1)$, we have $\size{M'' \cap M_2}\leq \size{M\cap M_2} + b_2' \leq b_2$, and similarly, the fact that $k\geq 2(m'-b_1')$ implies that $\size{M'' \cap M_1}\leq \size{M\cap M_1} + b_1' \leq b_1$. However, $M''$ is not a perfect matching, so we need one final modification to obtain the required perfect matching.
        
        To this end, define $M^\ast = M''\Delta P$, and note that $M^\ast$ is a perfect matching of $F$ satisfying $M^\ast \cap M_1 \subseteq M''\cap M_1$ and $M^\ast \cap M_2 \subseteq M''\cap M_2$; this is because $P$ is an $(M_3, M')$-alternating path and $M'\subseteq M''$. Additionally, we have $\size{M^\ast \cap M_3} = \size{P\cap M_3}\leq b_3$ by~\eqref{eq:P cap M_3}. This completes the proof of the base case.
        \medskip
        
        Now suppose that $(M_1\cup M_2)-V(M)$ contains more than one cycle. Since the number of vertices left unsaturated by $M$ is $2m'$ and  $C$ is a cycle of shortest length, it follows that $2\ell\leq m'\leq 2b_1'$, where the final step uses~\eqref{eq:b_1'b_2'}; therefore $\ell \leq b_1'$. 
        Hence, we may add the edges in $C\cap M_1$ to $M$ without exceeding the permitted number of $M_1$-edges. Write  $M' = M\cup (C\cap M_1)$ and note that $(M_1\cup M_2)-V(M')$ has one fewer cycle than $(M_1\cup M_2)-V(M)$ and $|M'\cap M_1|= |M\cap M_1|+\ell\leq b_1$. Thus, the claim follows by induction.
    \end{proof}

\section{Switching Lemma}\label{sec:switch}

It remains to prove \cref{lem:main}. We begin by presenting some auxiliary results that allow us to gather some information about a connected graph $G$ of the given form. To simplify the presentation, throughout this section, we will demonstrate how to obtain an $(a_1+1,a_2,a_3-1)$-matching from an $(a_1,a_2,a_3)$-matching; the existence of an $(a_1,a_2+1,a_3-1)$-matching follows by exchanging the roles of $M_1$ and $M_2$.

\subsection{Auxiliary results}
To avoid cluttering the statements, we assume throughout the rest of the section that $G$ is a bipartite graph on $2n$ vertices which is the union of three disjoint perfect matchings $M_1,M_2$, and $M_3$. For convenience, we sometimes refer to the different perfect matchings as colors. Our idea for proving \cref{lem:main} is based on the augmentation of a matching $M$ along a path. For us to be able to do so effectively, we need said path to interact with~$M$, as well as with~$M_1$, $M_2$, and $M_3$, in a controlled way. The following definition captures the interaction of a path with $M$ that we are looking for.
\begin{definition}[nearly-$M$-alternating paths]\label{def:nearly-alternating}
    Let $M$ be a matching of size $n-1$. A path $P=(v_1,\dots,v_\ell)$ is called \emph{nearly-$M$-alternating} if the two $M$-unsaturated vertices of $G$ are on $P$ and the matching $M\cap P$ saturates the remaining vertices of $P$.  
\end{definition}

\begin{observation}\label{obs:unsaturated-positions}
The length $\ell-1$ of a nearly-$M$-alternating path $P = (v_1,\ldots , v_\ell)$ is odd. 
If~$v_i$ and~$v_j$ are the two $M$-unsaturated vertices and $1\leq i < j\leq \ell$, then $i$ is odd and $j$ is even.
\end{observation}

In our attempt to find an $(a_1+1, a_2, a_3-1)$-matching given an $(a_1,a_2, a_3)$-matching $M$, we will first identify an appropriate nearly-$M$-alternating path and then try to shift around the two unsaturated vertices on it in order to achieve that the number of edges of each color in the matching is appropriate. For this the following definition will be useful. 

\begin{definition}\label{def:Mij}
    For a nearly-$M$-alternating path $P=(v_1, \ldots, v_\ell)$, we denote its unique perfect matching by $M_P$. For an odd integer $i'$ and an even integer $j'$ with 
    $1\leq i' < j' \leq \ell$, we define $$M(P;i',j'):= (M_P \Delta  P_{i',j'}) \cup (M\setminus P).$$ 
    \end{definition}

\begin{observation}\label{obs:Mij}
   Let $P=(v_1, \ldots, v_\ell)$ be a nearly-$M$-alternating path, $i'$ be an odd integer, and~$j'$ be an even integer such that $1\leq i' < j' \leq \ell$. 
   Then $M':=M(P;i',j')$ is a matching of size~$n-1$ and $P$ is a nearly-$M'$-alternating path with unsaturated vertices $v_{i'}$ and $v_{j'}$. Furthermore $M' = (M'\cap P) \cup (M'\setminus P)$ and $M'\cap P = M_P\Delta P_{i',j'}$.
\end{observation}  
 \begin{proof}
     To see that the set $M'$ is a matching, note that  $P$ is nearly-$M$-alternating, implying in particular that a vertex of $P$ is $M$-saturated if and only if it is $M\cap P$-saturated. Therefore,~$M\setminus P$ is vertex-disjoint from $P$. Now, since $i'$ is odd and $j'$ is even, the matching  $M_P$ contains the edges $v_{i'}v_{i'+1}$ and $v_{j'-1}v_j$ (and does not contain $v_{i'-1}v_{i'}$ and $v_{j'}v_{j+1}$). Additionally,  $M_P\cap P_{i',j'}$ is a perfect matching on $P_{i',j'}$ and hence $P_{i',j'}\setminus M_P$ is a matching that saturates every vertex of $P_{i',j'}$ except for the endpoints. Similarly, $M_P\setminus P_{i',j'}$ saturates every vertex not on $P_{i',j'}$. Thus,  $M_P \Delta  P_{i',j'}$ is a matching on $P$ saturating every vertex of $P$ except for $v_{i'}$ and $v_{j'}$, which together with the edges in $M\setminus P$ forms a matching of size~$n-1$. Moreover, $P$ is nearly-$M'$-alternating, as needed.
 \end{proof}

   Note that, if the $M$-unsaturated vertices of $P$ in the above observation are $v_i$ and $v_j$, where~$1\leq i < j \leq \ell$, then $M=M(P;i,j)$.
   \smallskip

Throughout our proofs, we will be tracking how the shifting of unsaturated vertices along a nearly-$M$-alternating path $P$ affects the number of edges  of each color in the matching. We define a function that will allow us to do this  by looking only at the edges of a specific color between the two unsaturated vertices of $P$. 

\begin{definition}[$f_c(P,M)$]
    Let $M$ be a matching of size $n-1$ and $P$ be a nearly-$M$-alternating path with $M=M(P;i,j)$ for some $i<j$. Given $c\in\{1,2,3\}$, define $$f_c(P,M) := |(P_{i,j}\cap M_c)\cap M|-|(P_{i,j}\cap M_c)\setminus M|.$$
\end{definition}

\begin{observation}\label{obs:changes}
 Let $M$ and $M'$ be matchings of size $n-1$ and $P$ be a path that is both nearly-$M$-alternating and nearly-$M'$-alternating and such that $M\setminus P = M'\setminus P$. Then, for any $c\in \set{1,2,3}$, we have
  $$      a_c(M') - a_c(M) = f_c(P,M') - f_c(P,M).
    $$  
 \end{observation}
\begin{proof} 
     By Observation~\ref{obs:Mij}, if $M=M(P;i,j)$, then we have $M\cap P = M_P\Delta P_{i,j}$  and consequently $a_c(M\cap P) = a_c(M_P)+f_c(P,M)$. Similarly, $a_c(M'\cap P)=a_c(M_P)+f_c(P,M')$. As $M\setminus P = M'\setminus P$, we get
    \begin{align*}
        a_c(M') = a_c(M'\setminus P) + a_c(M'\cap P) &= a_c(M'\setminus P)+a_c(M_P)+ f_c(P,M')\\ &= a_c(M\setminus P)+a_c(M\cap P)-f_c(P,M)+ f_c(P,M')\\
        &=a_c(M)-f_c(P,M)+ f_c(P,M').
    \end{align*}
    Rearranging gives the desired equality.
\end{proof}

\bigskip
Next, we define what we consider desirable interaction of a path with the matchings $M_1$, $M_2$, and $M_3$.
\begin{definition}[good paths]
   Let $c\in \set{1,2}$. A path $P = (v_1,\dots,v_\ell)$ is \emph{$c$-good} if 
   \begin{align*}
       P\cap M_c \subseteq \set{v_{2s+1}v_{2s+2}\,:\, 0\leq s\leq (\ell-2)/2} \,\,\text{and} \,\, P\cap M_{3-c} \subseteq \set{v_{2s}v_{2s+1}\,:\, 1\leq s\leq (\ell-1)/2}, 
   \end{align*}
   that is, if all $M_c$-edges appear in odd positions and all $M_{3-c}$-edges appear in even positions on $P$.
   We say that $P$ is \emph{good} if it is $c$-good for some $c\in \set{1,2}$.
\end{definition}

In other words, a path $P$ is $c$-good if  $P$ is $(M_c\cup M_3,M_{3-c}\cup M_3)$-alternating and the first edge~$v_1v_2$ of $P$ is in $M_c\cup M_3$. 
Note that an $M_1$- or $M_2$-alternating path is always good, but the notion of a good path provides some extra flexibility by allowing the path to contain edges from $M_3$ as well. Often we look at paths which are nearly-$M$-alternating for some given matching $M$ and good at the same time. We make an important observation about the meaning of the functions $f_1$ and $f_2$ on such paths.

\begin{observation}\label{obs:f-good-paths}
    Let $P$ be a $c$-good, nearly-$M$-alternating path for some color $c\in \set{1,2}$ and matching $M=M(P;i,j)$ of size $n-1$. Then the number of edges of $P_{i,j}$ in color $c' \in \{ 1,2\}$ 
    is~$-f_{c'}(P,M)$ if $c'=c$ and $f_{c'}(P,M)$ if $c'\neq c$. 
\end{observation}
\begin{proof}
    By definition, 
    \begin{align*}
    f_{c'}(P,M) = |(P_{i,j}\cap M_{c'})\cap M|-|(P_{i,j}\cap M_{c'})\setminus M|.
    \end{align*}
    Since $P$ is $c$-good and $P_{i,j}$ is $M$-alternating, it follows that $|P_{i,j}\cap M_{c'}\cap M|=0$ if $c'= c$ and $|(P_{i,j}\cap M_{c'})\setminus M|=0$ if $c'\neq c$. Therefore, $P_{i,j}$ contains exactly $\size{f_{c'}(P,M)}$ edges of $M_{c'}$. The claim then follows from the fact the only surviving term in the definition of $f_{c'}(P,M)$ is nonpositive in the former case and nonnegative in the latter. 
\end{proof}

The proof of \cref{lem:main} attempts, as much as possible, to shift the unsaturated vertices of our carefully chosen nearly-$M$-alternating path towards its end, while keeping the number of edges of color~$2$ in the matching unchanged. The next lemma describes one step in this procedure and identifies  when such a shift is not  possible anymore. 

\begin{lemma}\label{lem: can move_n}
   Let $M\subseteq G$ be a matching and $P=(v_1,\dots,v_\ell)$ be a good, nearly-$M$-alternating path such that $M=M(P;i,j)$ for some $j\leq \ell-2$ even and $i<j$ odd. Assume further that either $i\leq j-3$, or~$i+1=j$ and $v_{i}v_{j}\notin M_3$. Then, one of $M(P;i+2,j),M(P;i,j+2)$, or $M(P;i+2,j+2)$ is a matching $M'$ with $a_2(M) = a_2(M')$ and $|a_3(M)-a_3(M')|\leq 1$.
\end{lemma}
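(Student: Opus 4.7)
The plan is to describe each candidate matching as a local modification of $M$ along $P$, and then use goodness together with the fact that $M_2$ is a matching to single out the option which preserves $a_2$ and shifts $a_3$ by at most one. Write $e_k:=v_kv_{k+1}$ for the edge of $P$ at position $k$. Since $M=M_{i,j}(P,M)$ with $i$ odd and $j$ even, a direct inspection of the definition shows that
\[ M_{i+2,j}(P,M)=(M\setminus\{e_{i+1}\})\cup\{e_i\}, \quad M_{i,j+2}(P,M)=(M\setminus\{e_{j+1}\})\cup\{e_j\}, \]
while $M_{i+2,j+2}(P,M)$ performs both swaps at once; call these Options 1, 2, and 3 respectively. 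Each is a genuine matching because $M$ saturates $V(P)\setminus\{v_i,v_j\}$ only through edges of $P$. Goodness of $P$ places the odd-position edges $e_i,e_{j+1}$ in $M_c\cup M_3$ and the even-position edges $e_{i+1},e_j$ in $M_{3-c}\cup M_3$; in particular, because $M_2$ is a matching and $e_i,e_{i+1}$ share $v_{i+1}$ (similarly $e_j,e_{j+1}$ share $v_{j+1}$), each pair contains at most one $M_2$-edge, and which edge could potentially lie in $M_2$ is forced by $c$ (the even-position one when $c=1$, the odd-position one when $c=2$).

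For the generic case $i\le j-3$, I would split into three sub-cases. If no $M_2$-edge lies in $\{e_i,e_{i+1}\}$, then Option 1 is $a_2$-neutral and, since it only involves $e_i$ and $e_{i+1}$, gives $\Delta a_3\in\{-1,0,1\}$. Symmetrically, if no $M_2$-edge lies in $\{e_j,e_{j+1}\}$, then Option 2 works. Otherwise each pair contains exactly one $M_2$-edge, and these two $M_2$-edges must sit at the $c$-prescribed positions. In Option 3 the two $\Delta a_2$ contributions cancel, giving $\Delta a_2=0$; since an $M_2$-edge cannot also lie in $M_3$, the contributions of these two $M_2$-edges to $\Delta a_3$ vanish, so $\Delta a_3$ reduces to the $M_3$-indicator difference of just the two non-$M_2$ edges among $\{e_i,e_{i+1},e_j,e_{j+1}\}$, which lies in $\{-1,0,1\}$.

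The degenerate case $j=i+1$ needs separate attention because $e_j=e_{i+1}$ and $e_{j+1}=e_{i+2}$, so Option 1 is not defined, while Options 2 and 3 both collapse to single swaps, namely $M\mapsto(M\setminus\{e_{i+2}\})\cup\{e_{i+1}\}$ and $M\mapsto(M\setminus\{e_{i+2}\})\cup\{e_i\}$, respectively. The hypothesis $v_iv_j\notin M_3$ forces $e_i\in M_c$. When $c=1$, both $e_i$ and $e_{i+2}$ are odd-position edges, hence not in $M_2$, and Option 3 preserves $a_2$ with $\Delta a_3\in\{-1,0\}$. When $c=2$ we have $e_i\in M_2$; Option 3 works if $e_{i+2}\in M_2$, and Option 2 works if $e_{i+2}\in M_3$, covering both possibilities.

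The main obstacle is the final sub-case of the generic analysis, where Option 3 swaps four edges and could a priori change $a_3$ by $\pm 2$. The crucial use of goodness is precisely that, in this sub-case, it forces the two $M_2$-edges to lie at specifically-coloured positions, so they carry no $M_3$-content, and the $M_3$-balance reduces to the single-indicator difference needed for the bound.
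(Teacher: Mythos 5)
Your proposal is correct and follows essentially the same route as the paper's proof: the same three candidate matchings viewed as one- or two-edge swaps along $P$, the same case analysis on whether the pairs $\{v_iv_{i+1},v_{i+1}v_{i+2}\}$ and $\{v_jv_{j+1},v_{j+1}v_{j+2}\}$ contain an $M_2$-edge, and the same use of goodness (plus disjointness of $M_2$ and $M_3$) to see that in the four-edge swap exactly one $M_2$-edge is added and one removed, leaving the $M_3$-count to change by at most one. The only difference is cosmetic organization of the degenerate case $j=i+1$, where you split on the color of $v_{j+1}v_{j+2}$ rather than performing the paper's initial reduction, but the computations coincide.
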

\begin{proof}
    All matchings in this proof are with respect to $P$ and $M$, so we will write $M_{s,t}$ for the matching $M(P;s,t)$. We also refer to $M$ as $M_{i,j}$ to highlight the interaction of $M$ with $P$, but it is important to keep in mind that these are the same matching.
    We may assume that one of $v_{j}v_{j+1}$ and $v_{j+1}v_{j+2}$ is in $M_2$, as otherwise  we are done by taking $M_{i,j+2}=M_{i,j}\setminus \set{v_{j+1}v_{j+2}}\cup\{v_{j}v_{j+1}\}$. 
        
    Suppose first that $i\leq j-3$. Then, by the same argument,  one of $v_{i}v_{i+1}$ and $v_{i+1}v_{i+2}$ is in~$M_2$, as otherwise $M_{i+2,j}=M_{i,j}\setminus\set{v_{i+1}v_{i+2}}\cup\{v_{i}v_{i+1}\}$ satisfies the required properties. Let $M' = M_{i+2,j+2}=M_{i,j}\setminus\set{v_{i+1}v_{i+2},v_{j+1}v_{j+2}} \cup \{v_{i}v_{i+1},v_{j}v_{j+1}\}$. Since $P$ is a good path, $i$ is odd, and $j$ is even, we know that either $v_{i}v_{i+1},v_{j+1}v_{j+2}\in M_2$ or $v_{i+1}v_{i+2},v_{j}v_{j+1}\in M_2$. Either way, it follows that $a_2(M)=a_2(M')$, since we add exactly one edge of~$M_2$ but also remove exactly one edge of~$M_2$. At most one of the removed edges and at most one of the added edges is in $M_3$, and thus we also have $|a_3(M)-a_3(M')|\leq 1$.
            
    Suppose now that $i = j-1$. By assumption,  $v_{i}v_j\notin M_3$. Recall that one of $v_jv_{j+1}$ and $v_{j+1}v_{j+2}$ belongs to $M_2$. Therefore, since $P$ is good, either $v_{i}v_{j},v_{j+1}v_{j+2}\in M_2$ or $v_{j}v_{j+1}\in M_2$. Either way, $M' = M_{i+2,j+2} = M_{i,j}\setminus\set{v_{j+1}v_{j+2}}\cup\{v_iv_j\}$ satisfies the statement.
\end{proof}

We now apply this step-wise change iteratively to get a sequence of matchings ``deforming'' one matching into another in ``small'' steps.
While it is quite simple to keep track of how the number of $M_i$-edges changes in \cref{lem: can move_n}, this becomes more difficult when we apply \cref{lem: can move_n} multiple times. The next lemma looks at a sequence, generated by  applying \cref{lem: can move_n} iteratively, and, similarly to the Intermediate Value Theorem, allows us to pick a matching with a specific number of $M_3$-edges in this sequence.

\begin{lemma} (Intermediate Value Lemma)\label{lem: intermediate value_n}
 Let $M$ and $M'$ be matchings of size $n-1$ and $P$ be a good path that is both nearly-$M$-alternating and nearly $M'$-alternating and such that $M\setminus P = M'\setminus P$. 
 Assume furthermore that $a_2(M) = a_2(M') =:a_2$ and $a_3(M)\leq a_3(M')$. Then, for any integer $a_3^*$ satisfying $a_3(M)\leq a_3^*\leq a_3(M')$, there exists an $(n-1-a_2-a_3^*,a_2,a_3^*)$-matching in $G$.
\end{lemma}

\begin{proof}
    Let $P=(v_1,\ldots,v_\ell)$.    
     As $P$ is both nearly-$M$-alternating and nearly $M'$-alternating and~$M\setminus P = M'\setminus P$, we may write $M = M(P;i,j)$ and $M' = M(P;i',j')$ for some even $j,j'\leq \ell$ and odd~$i<j$ and~$i'<j'$. As in the proof of \cref{lem: can move_n}, all matchings in this proof are with respect to $P$ and $M$, and we write $M_{s,t}$ for $M(P;s,t)$. To prove the statement, we find a sequence of matchings of size $n-1$ each, with the first matching being $M_{i,j}$ and the last being $M_{i',j'}$, such that each matching in the sequence has exactly $a_2$ edges of $M_2$ and the numbers of $M_3$-edges in two consecutive matchings differ by at most one. This suffices to prove the statement, since there must exist a matching in this sequence with exactly $a_3^*$ edges of $M_3$. 
    We consider three different cases, in each of which we find a sequence of matchings as described above.
    \smallskip

    \noindent\underline{Case 1}: One of the two intervals $[i,j]$ and $[i',j']$ contains the other; without loss of generality, assume $i\leq i'<j'\leq j$. 
    Since $P$ is good and $a_2(M_{i,j})=a_2(M_{i',j'})$, by \cref{obs:changes}, $f_2(P,M_{i,j})=f_2(P,M_{i',j'})$ and hence, by \cref{obs:f-good-paths}, it follows that $P_{i,j}$ and $P_{i',j'}$ contain the same number of $M_2$-edges. Therefore, neither $P_{i,i'}$ nor $P_{j',j}$ contains an edge of $M_2$. 
    Consider then the sequence of matchings $M_{i,j}, M_{i+2,j},\dots, M_{i',j}, M_{i',j-2},\dots, M_{i',j'}$.
    In this sequence, each matching is obtained from the previous one by exchanging one edge for another and thus, the numbers of $M_3$-edges of two consecutive matchings naturally differ by at most one. Furthermore, none of the exchanged edges are in $M_2$ and hence, every matching in the sequence contains exactly $a_2(M_{i,j})$ edges of $M_2$, as required.

    \smallskip

    \noindent\underline{Case 2}: $P_{i,j}$ does not contain any edges of $M_2$. By \cref{obs:changes,obs:f-good-paths}, neither does $P_{i',j'}$. Consider the sequence  of matchings $M_{i,j},M_{i+2,j},\dots,M_{j-1,j},M_{j'-1,j'},M_{j'-3,j'},\dots, M_{i',j'}$. Again, each matching is obtained from the previous one by exchanging one edge for another and has exactly $a_2(M_{i,j})$ edges of $M_2$.

    \smallskip

    \noindent\underline{Case 3}: Neither of the two cases above happen, that is, neither of the intervals contains the other and $P_{i,j}$ shares an edge with $M_2$. 
    Then, without loss of generality, assume that $i\leq i'$ and $j\leq j'$ (otherwise, reverse the direction of the path and use the fact that the intervals do not contain each other). 
    Let $\{M^t\}_{t=1}^{t^*}$ be the sequence of matchings obtained by iteratively applying \cref{lem: can move_n} starting with $M_{i,j}$ and stopping when $M^{t}=M_{i^*,j^*}$ does not satisfy the hypotheses of \cref{lem: can move_n}, or $i^* = i'$ or $j^* = j'$. The former occurs if $j^* = \ell$, in which case we have $j^* = j' = \ell$, or if $ i^*+1=j^*$ and $v_{i^*}v_{j^*}\in M_3$. Since $P_{i,j}$ contains an edge of $M_2$, by \cref{obs:changes,obs:f-good-paths}, $P_{i^*,j^*}$ contains an edge of $M_2$, as \cref{lem: can move_n} ensures that $a_2(M_{i,j})=a_2(M_{i^*,j^*})$; thus, the latter stopping condition cannot occur. 
    Therefore, the final matching $M^{t^*}=M_{i^*,j^*}$ satisfies $i^* = i'$ and $j^*\leq j'$, or $i^*\leq i'$ and $j^*=j'$. 
    Suppose $i^* = i'$ and $j^*\leq j'$; the other case follows from a similar argument. 
    Consider the sequence $M_{i,j}=M^1,M^2,\ldots,M^{t^*}=M_{i',j^*},M_{i',j^*+2},\dots,M_{i',j'}$. We show that this sequence satisfies the required properties. As mentioned above, \cref{lem: can move_n} guarantees  that each matching in~$\{M^t\}_{t=1}^{t^*}$ contains exactly $a_2(M_{i,j})$ edges of $M_2$ and the numbers of $M_3$-edges of any two consecutive matchings differ by at most one. Let us show that the extension also satisfies these two properties. Observe that each matching in $M_{i',j^*},M_{i',j^*+2},\dots,M_{i',j'}$ is obtained from the previous matching by replacing one edge of $P_{j^*,j'}$ with another. Thus, the numbers of $M_3$-edges in any two consecutive matchings differ by at most one. By \cref{obs:changes,obs:f-good-paths}, $P_{i',j'}$ and~$P_{i',j^*}$ contain the same number of edges of $M_2$. Hence, $P_{j^*,j'}$ does not contain any edges of $M_2$.
    Therefore, each matching in the extension also contains exactly $a_2(M_{i,j})$ edges of $M_2$, completing the proof.
\end{proof}

This concludes the development of our tool-set for analyzing good, nearly-$M$-alternating paths. What is left to do is to find an appropriate path of this type, satisfying several additional properties, to which we apply these tools. In general, our aim is to modify $M$ along a good, nearly-$M$-alternating path $P$ in such a way that the resulting matching has fewer $M_3$-edges. Consequently, $P$ should  contain at least one edge of $M\cap M_3$. 
As a first step, we consider the path components in the graphs~$M\cup M_1$ and $M\cup M_2$. Note that each of these graphs is the disjoint union of even cycles, isolated edges, and possibly a single path of length at least three. Note that, if the two $M$-unsaturated vertices form an isolated-edge component in $M\cup M_1$, then we can easily find an $(a_1+1,a_2,a_3-1)$-matching in $G$ using this edge. Thus, we may assume that these vertices belong to the unique nontrivial path component in $M\cup M_1$ and moreover, they must be its endpoints. Additionally, the two $M$-unsaturated vertices must be the endpoints of a path component (possibly consisting of a single edge) in $M\cup M_2$.

\begin{definition}[$P_i(M)$, $C_0(M)$]
    Given $i\in\{1,2\}$, a matching $M$ of size $n-1$, let~$P_i(M)$ denote the unique component in the graph $M\cup M_i$, containing the two $M$-unsaturated vertices.  We additionally write $C_0(M)$ for the graph $P_1(M)\cup P_2(M)$.
\end{definition}

Note that the components $P_1(M)$ and $P_2(M)$ can intersect in $M_3$-edges belonging to $M$. In the next lemma we argue that, if $G$ contains no $(a_1+1,a_2,a_3-1)$-matching, then the graph~$C_0(M)$ must have a very special structure.

\begin{lemma}\label{lem:M3edge}
    Let $a_1,a_2,a_3\in \mathbb{N}_0$ be integers with $a_1+a_2+a_3 = n-1$. Suppose $G$ contains an~$(a_1,a_2,a_3)$-matching $M$ but does not contain an $(a_1+1,a_2,a_3-1)$-matching. Then  $C_0(M)$ is an~$(M_1,M_2)$-alternating cycle.
\end{lemma}
\begin{proof}
For each $i\in \{1,2\}$, set $P_i=P_i(M)$. Denote the two $M$-unsaturated vertices by $u_1$ and~$u_2$. Suppose $P_1\cup P_2$ contains an $M_3$-edge.  Let $\ell_i\geq 1$ denote the number of $M_{i}$-edges between $u_i$ and the  $M_3$-edge closest to it on $P_i$, where we set $\ell_i$ to be infinity if no such edge exists. If $\ell_1$ and $\ell_2$ are both infinite, then neither  $P_1$ nor $P_2$ contains an edge of $M_3$. Thus, all the interior vertices of~$P_1$, respectively~$P_2$, are saturated by $M\cap M_2$, respectively $M\cap M_1$. Therefore, these two paths only intersect in their endpoints. Hence, $P_1\cup P_2$ forms a cycle in $M_1\cup M_2$ and we are done. Thus, suppose that at least one of $\ell_1$ and $\ell_2$ is finite. We will show that this implies that $G$ contains an~$(a_1+1,a_2,a_3-1)$-matching, leading to the desired contradiction.
We consider two cases. 

\noindent\underline{Case 1}: $\ell_1\leq \ell_2$. Let $e_1$ be the first $M_3$-edge on $P_1$ (counting from $u_1$). Let $P_1'$ denote the sub-path of $P_1$ starting at $u_1$ and ending after passing through $e_1$. Observe that $P_1'$ has length $2\ell_1$ and $M'=M\Delta P_1'$ is an $(a_1+\ell_1,a_2-\ell_1+1,a_3-1)$-matching. For finite $\ell_2$, let $P_2'$ denote the sub-path of $P_2$ of length $2(\ell_1-1)$ starting at $u_2$. Since $\ell_2\geq \ell_1$, we know that $P_2'$ is $(M_2,M\cap M_1)$-alternating and hence, every vertex of $P_2'$ other than $u_2$ is incident to an edge of $M\cap M_1$. Therefore, $P_2'$ does not intersect $P_1'$ and $M''=M'\Delta P_2'$ is an $(a_1+1,a_2,a_3-1)$-matching, as required. So, suppose that $\ell_2$ is infinite. Then, it may happen that the length of $P_2$ is less than $2(\ell_1-1)$ and we cannot simply take $P_2'$ to be a sub-path of $P_2$. However, since $u_1$ is the second endpoint of $P_2$, we may continue~$P_2$ by going through $u_1$ and following the edges of~$P_1$ until (and not including) $e_1$. Note that this extension of $P_2$ is an $(M_2,M'\cap M_1)$-alternating path of length at least $2\ell_1$. Let $P_2'$ be the sub-path  of length $2(\ell_1-1)$ starting at $u_2$. Finally, we get that $M''= M'\Delta P_2'$ is the desired $(a_1+1,a_2,a_3-1)$-matching. 

\noindent\underline{Case 2}: $\ell_2 <\ell_1$. We proceed similarly as in the previous case, first taking the symmetric difference of $M$ with the first $2\ell_2$ edges along $P_2$ starting from $u_2$ to get an $(a_1-\ell_2+1,a_2+\ell_2,a_3-1)$-matching $M'$, and then taking the symmetric difference of $M'$ with the first $2\ell_2$ edges along $P_1$ starting from~$u_1$, where we again extend $P_1$ through the beginning of $P_2$ if $\ell_1$ is infinite.
\end{proof}
The above lemma shows  that $P_i(M)$ is a good candidate for a path along which we can augment~$M$, unless $C_0(M)$ is a cycle of $M_1\cup M_2$. In the latter, case we will attempt to find a different path. If $C_0(M)$ is a cycle of $M_1\cup M_2$, then we must find a path $P$ from $u_1$ leaving this cycle, as otherwise $P$ does not contain an edge of $M_3$. 
The next simple result gives us such an $M$-alternating path but does not give us any information about this path with respect to $M_1$, $M_2$, and $M_3$.

\begin{lemma}\label{lem: alternating path_n}
    Let $M\subseteq G$ be a matching of size $n-1$ and $v\in V(G)$ be a vertex unsaturated by~$M$. Then every vertex of $G$ can be reached from $v$ by an $M$-alternating path.
\end{lemma}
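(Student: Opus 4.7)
The plan is to show by contradiction that the set $R$ of vertices reachable from $v$ by an $M$-alternating path (with $v$ itself included via the trivial path) equals $V(G)$, using that $G$ is connected (implicit from the section's context). Write $G = (A \sqcup B, E)$ with $v \in A$; since $M$ saturates equally many vertices on each side, there is a unique second unsaturated vertex $v^* \in B$. Supposing $T := V(G) \setminus R$ is nonempty, the strategy is to combine a one-sided closure observation with closure of $T$ under $M$-partners, and finish with a degree count.

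The first observation is that every neighbor of a vertex in $R \cap A$ lies in $R$. By bipartite parity, any $M$-alternating path from $v \in A$ to an $A$-vertex $u \neq v$ has even length and ends in an $M$-edge, so the $M$-neighbor $M(u)$ already appears as the penultimate vertex of any such path $P$. Hence for any neighbor $w$ of $u$, either $w \in V(P) \subseteq R$, or $uw \notin M$ and $w \notin V(P)$, and extending $P$ by $w$ produces a valid $M$-alternating path to $w$. For $u = v$ itself, every incident edge is non-$M$, so each length-one path $(v,w)$ places $w$ in $R$ directly.

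The second observation is that $T$ is closed under $M$-partners: if a saturated vertex $u$ lies in $T$, then $M(u) \in T$. Otherwise, by parity a path witnessing $M(u) \in R$ ends in an edge of the opposite type to $uM(u)$, so extending by $uM(u)$ would place $u$ in $R$, a contradiction. Closure under $M$-partners together with $v \in R$ forces $M$ to match all of $T \cap A$ into $T \cap B$, giving $|T \cap A| \leq |T \cap B|$. On the other hand, the contrapositive of the first observation says no edge joins $T \cap B$ to $R \cap A$; since $G$ is $3$-regular as the union of three perfect matchings, all $3|T \cap B|$ edges incident to $T \cap B$ must go to $T \cap A$, so $|T \cap B| \leq |T \cap A|$. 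Equality must hold throughout, forcing $v^* \in R$ and exhausting every edge incident to $T \cap A$ within $T$; consequently no edge of $G$ crosses between $T$ and $R$ at all, contradicting connectivity together with $T \neq \emptyset$.

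I expect the main obstacle to be the asymmetry of the closure operation: reachability propagates freely from $R \cap A$ to every neighbor, but only along $M$-edges from $R \cap B$. Handling the $B$-side requires the indirect $M$-partner closure, and the final contradiction relies crucially on the $3$-regularity afforded by the three-matching decomposition to make the edge count tight.
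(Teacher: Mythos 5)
Your proposal is correct and takes essentially the same route as the paper's proof: both arguments rest on the two closure properties of the reachable set (every neighbor of a reachable $A$-vertex is reachable, and matching partners propagate reachability), combined with the $3$-regularity count and connectivity, the only difference being that you conclude the cut between $R$ and $T$ is empty, whereas the paper locates a single crossing edge from the reachable $A$-side to an unreached $B$-vertex and extends an alternating path across it. One harmless slip: in your partner-closure observation, for $u\in T\cap B$ the path witnessing $M(u)\in R$ ends in an $M$-edge (namely $uM(u)$ itself), not an edge of the opposite type, but then $u$ already lies on that path, so the closure statement still holds and the only case your counting actually uses ($u\in T\cap A$) is justified exactly as you wrote.
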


    \begin{proof}
        Let $A\sqcup B$ be the bipartition of $G$ and suppose $v\in A$.
        Since $G$ is $3$-regular, we know that $\size{A} = \size{B}$ and hence, every vertex in $A\setminus\set{v}$ is saturated by $M$. Let $A'\subseteq A$ and $B'\subseteq B$ be the sets of vertices reachable from $v$ via $M$-alternating paths and note that $A'\neq \emptyset$, as $v\in A'$. Observe that, for each $w\in B'$ that is saturated by $M$,  its matching partner $M(w)$ belongs to $A'$, since any $M$-alternating path from $v$ to $w$ does not contain $M(w)$ and can thus be extended to $M(w)$. But~$B'$ contains at most one vertex that is unsaturated by $M$ and $v\in A'$ is unsaturated by $M$, implying~$|A'|\geq |B'|$. 
        Suppose that $ (A\setminus A')\cup  (B\setminus B')$ is nonempty. Since $G$ is connected, there exists an edge $xy$ with $x\in A'\cup B'$ and $y\in (A\setminus A')\cup  (B\setminus B')$. As a result, using the fact that~$|A'|\geq |B'|$, we conclude that there are at most $3|A'|-1$ edges between $A'$ and $B'$; indeed, $G$ is 3-regular and at least one edge incident to a vertex of $A'$ does not go to $B'$. Therefore, we may assume that~$x\in A'$ and $y\in B\setminus B'$. 
        The edge $xy$ cannot be in $M$, as the only way to reach $x$ on an $M$-alternating path is via $M(x)$. Thus, taking an arbitrary $M$-alternating path from $v$ to $x$ and adding the edge $xy$, we reach $y$ on an $M$-alternating path, a contradiction. 
    \end{proof}

The final lemmas of this section cleverly apply \cref{lem: alternating path_n} to find the desired path while maintaining some control over its interaction with $M_1$, $M_2$, and $M_3$. We begin with a more technical lemma, from which we will then derive the final result about the existence of a suitable path. We first state the following simple observation about matchings in even cycles, phrased in a language convenient for our subsequent use.
\begin{observation}\label{obs: matchings of a cycle_n}
    Let $C$ be a cycle in $M_1\cup M_2$, $v\in V(C)$, $c\in\{1,2\}$, and $k< |C|/2$ be an integer. Then there exists a matching $M$ in $C$ with $|M\cap M_c| = k$ and $|M\cap M_{3-c}| = |C|/2-1-k$ such that~$v$ is unsaturated by $M$.
\end{observation}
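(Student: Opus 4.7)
The plan is to construct the required matching explicitly by choosing a suitable second unsaturated vertex and then matching what remains. Label the vertices of $C$ cyclically as $v = v_0, v_1, \ldots, v_{|C|-1}$, so that consecutive edges alternate between $M_1$ and $M_2$; say $v_0 v_1 \in M_{\gamma}$ for some $\gamma \in \{1,2\}$, which forces $v_i v_{i+1}$ (indices mod $|C|$) to lie in $M_{\gamma}$ for $i$ even and in $M_{3-\gamma}$ for $i$ odd. For any matching $M$ in $C$ that leaves $v_0$ unsaturated, both edges of $C$ incident to $v_0$ are omitted, so $M$ is a matching inside the path $C - v_0$. If $M$ also leaves a second vertex $s$ unsaturated, then $C - \{v_0, s\}$ must split into two sub-paths of even length; since $C$ is bipartite and $v_0$ lies in one part, this forces $s = v_{2j+1}$ for some $j \in \{0, 1, \ldots, |C|/2 - 1\}$.

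Fixing such a $j$, the two components of $C - \{v_0, v_{2j+1}\}$ are the sub-paths $v_1, v_2, \ldots, v_{2j}$ and $v_{2j+2}, v_{2j+3}, \ldots, v_{|C|-1}$, and each admits a unique perfect matching. Reading off colors from the alternation rule, these matchings together use $j$ edges of $M_{3-\gamma}$ and $|C|/2 - 1 - j$ edges of $M_{\gamma}$, for a total of $|C|/2 - 1$ edges. The hypothesis $k < |C|/2$ places both $k$ and $|C|/2 - 1 - k$ in $\{0, 1, \ldots, |C|/2 - 1\}$, so one can set $j = k$ when $\gamma = 3 - c$ and $j = |C|/2 - 1 - k$ when $\gamma = c$; in either case the resulting matching has exactly $k$ edges of $M_c$ and $|C|/2 - 1 - k$ edges of $M_{3-c}$, and leaves $v$ unsaturated. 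Since the construction is fully explicit, there is no genuine obstacle here; the only verification needed is the color count, which follows directly from the alternation of $M_1$ and $M_2$ along $C$.
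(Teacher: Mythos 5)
Your proof is correct and is essentially the paper's argument: both construct the matching explicitly along the cycle by taking a consecutive block of edges of one color and a consecutive block of the other, leaving $v$ and one suitably chosen odd-position vertex unsaturated. Your version merely chooses the second unsaturated vertex $v_{2j+1}$ first and handles both orientations via $\gamma$, whereas the paper fixes the orientation so that the first edge at $v$ lies in $M_{3-c}$; the content is the same.
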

\begin{proof}
    Suppose $C=(v_1,v_2,\ldots,v_{|C|})$ with $v=v_1$ and $v_1v_2\in M_{3-c}$. Then, the desired matching~$M$ can be obtained by adding the first $k$ edges of $M_c$ and the last $|C|/2-1-k$ edges of  $M_{3-c}$ to $M$.
 \end{proof}

In addition to the cycle $C_0(M)$, we consider another collection of cycles in $M_1\cup M_2$, on which we cannot augment $M$.
\begin{definition}[$\mathcal{C}(M)$]
Let $M$ be a matching. We define $\mathcal{C}(M)$ to be the collection of cycles in $(M\cap M_2)\cup M_1$ and $(M\cap M_1)\cup M_2$. In other words, $\mathcal{C}(M)$ contains all cycles of the graph $M_1\cup M_2$ that are also $M$-alternating.
\end{definition}

We are now ready to state our first (technical) lemma, establishing the existence of an $M$-alternating path $P_0$ from $u_1$  containing a matched edge of $M_3$. For this, we need $P_0$ to leave~$C_0(M)\cup \mathcal{C}(M)$ altogether. 
 
\begin{lemma}\label{lem:existence P_0}
    Let $a_1,a_2,a_3\in \mathbb{N}_0$ be integers with $a_1+a_2+a_3 = n-1$ and $a_3>0$. Suppose $G$ contains an $(a_1,a_2,a_3)$-matching but does not contain an $(a_1+1,a_2,a_3-1)$-matching. Then there exists an $(a_1,a_2,a_3)$-matching $M\subseteq G$ with unsaturated vertices $u_1$ and $u_2$ such that $C_0(M)$ is a cycle in $M_1\cup M_2$, and an $M$-alternating path $P_0 = (v_1,\dots, v_{\ell+1})$ such that:
    \begin{enumerate}[label=(\roman*)]
        \item $v_1=u_1$, $v_{\ell},v_{\ell+1}\in V(G)\setminus (V(C_0(M))\cup V(\mathcal{C}(M)))$, and all other vertices of $P_0$ are in~$V(\mathcal{C}(M))$.\label{lem:existence P_0:vertices}
        \item $v_{\ell}v_{\ell+1} \in M$.\label{lem:existence P_0:last edge}
        \item $V(P_0) \cap V(C_0(M)) = \set{u_1}$.\label{lem:existence P_0:C_0}
        \item For every cycle $C\in \mathcal{C}(M)$, the vertices in $V(C)\cap V(P_0)$ form a sub-path in both $P_0$ and $C$.\label{lem:existence P_0:intervals}
        \item $M\cap P_0 \subseteq M_1$ or $M\cap P_0 \subseteq M_2$. \label{lem:existence P_0:matched edges}
    \end{enumerate}
\end{lemma}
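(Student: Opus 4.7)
I would construct the pair $(M, P_0)$ through an extremal argument combined with local modifications, exploiting the structural lemmas developed so far.

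\textbf{Setup.} Start with any $(a_1,a_2,a_3)$-matching $M$ of $G$, which exists by hypothesis. Lemma~\ref{lem:M3edge} together with the absence of an $(a_1+1,a_2,a_3-1)$-matching forces $C_0(M)$ to be a cycle in $M_1 \cup M_2$. Because $a_3 > 0$, $M$ contains an $M_3$-edge; both its endpoints are $M_3$-saturated and hence lie in $U(M) := V(G) \setminus (V(C_0(M)) \cup V(\mathcal{C}(M)))$, since interior vertices of $C_0(M)$ and every vertex of a cycle in $\mathcal{C}(M)$ are saturated by $M \cap (M_1 \cup M_2)$. Thus $U(M) \neq \emptyset$, and by Lemma~\ref{lem: alternating path_n}, $u_1$ reaches every vertex of $U(M)$ via an $M$-alternating path.

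\textbf{Extremal choice.} Consider pairs $(M, P)$ where $M$ is an $(a_1,a_2,a_3)$-matching and $P$ is an $M$-alternating path from $u_1$ ending at a vertex $v_\ell \in U(M)$ whose $M$-partner $v_{\ell+1} := M(v_\ell)$ also lies in $U(M)$ and is joined to $v_\ell$ by an edge of $M_1 \cup M_2$; setting $P_0 = P \star (v_\ell, v_{\ell+1})$ gives (a) and (b). Among these, I would select a pair minimizing lexicographically: (i) $|V(P_0) \cap V(C_0(M))|$; then (ii) the number of distinct maximal arcs of $V(P_0) \cap V(C)$ over all $C \in \mathcal{C}(M)$; and then (iii) the number of color-changes among $M$-edges of $P_0$.

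\textbf{Enforcing (c)--(e).} For (c), if $P_0$ revisits $V(C_0(M))$ at some $w \neq u_1$, shortcut the initial segment of $P_0$ from $u_1$ to $w$ by one of the two arcs of $C_0(M)$ between $u_1$ and $w$ (both are $M$-alternating sub-paths of $P_1(u_1, M) \cup P_2(u_1, M)$); a parity mismatch can be corrected by a simultaneous modification of $M$ within $C_0(M)$ via Observation~\ref{obs: matchings of a cycle_n}, shifting the unsaturated vertex while preserving the $(a_1,a_2,a_3)$-structure. An analogous in-cycle shortcut inside $C \in \mathcal{C}(M)$ enforces (d). For (e), each cycle $C \in \mathcal{C}(M)$ has a fixed type (either $M \cap C = M_1 \cap C$ or $M \cap C = M_2 \cap C$) dictating the color of its $M$-edges appearing in $P_0$, so color-changes along $P_0$ indicate two visited cycles of differing types; these can be remedied by rerouting $P_0$ to avoid one of them, or by flipping a cycle's type via Observation~\ref{obs: matchings of a cycle_n}.

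\textbf{Main obstacle.} The principal difficulty is ensuring the candidate set of pairs $(M, P)$ is nonempty and that the extremum attains all conditions simultaneously. The most delicate case arises when every vertex of $U(M)$ is $M_3$-saturated (so no terminal $M_1 \cup M_2$-edge of $M$ exists inside $U(M)$): here I would perform a balanced swap exchanging a pair of $M_3$-edges for a pair of $M_1 \cup M_2$-edges along an $M$-alternating $4$-cycle through two category-3 cycles, compensated by an exchange elsewhere to preserve the $(a_1,a_2,a_3)$-structure. If no such modification works, one can directly assemble an $(a_1+1,a_2,a_3-1)$-matching from the resulting configuration, contradicting the hypothesis.
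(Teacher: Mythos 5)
Your overall skeleton --- fix an extremal pair $(M,P)$, use \cref{lem:M3edge} to force $C_0(M)$ to be a cycle, \cref{lem: alternating path_n} to get candidate paths, and then do local surgery on $C_0(M)$ and the cycles of $\mathcal{C}(M)$ via \cref{obs: matchings of a cycle_n} --- is the same as the paper's. But the extremal criteria you minimize do not do the work that is needed. First, nothing in (i)--(iii) forces the internal vertices of $P_0$ to lie in $V(\mathcal{C}(M))$, so property (a) is not established; the paper gets this by minimizing the number of $M_3$-edges and then the length of the path. Second, your ``main obstacle'' is a non-issue: every vertex of $C_0(M)\cup\mathcal{C}(M)$ has both its $M_1$- and $M_2$-edges inside that structure and is $M$-matched inside it (or is $u_1,u_2$), so the first edge of any $M$-alternating path leaving the structure is automatically an edge of $M_3\setminus M$, whence the $M$-edge at the first vertex reached outside the structure lies in $M_1\cup M_2$ and has both endpoints outside; no ``balanced swap along a $4$-cycle'' (which you do not define) is needed, and the candidate set is nonempty for free.

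The genuine gap is property (e), which is the heart of the lemma, and your two suggested remedies do not work as stated. ``Rerouting $P_0$ to avoid'' a wrong-type cycle is unjustified: the path enters and leaves each cycle of $\mathcal{C}(M)$ through specific $M_3\setminus M$ edges, and there is no reason an alternative route avoiding that cycle exists. ``Flipping a cycle's type'' is not a legal move on its own: replacing $M\cap C$ by the opposite perfect matching of $C$ changes $a_1$ and $a_2$ by $e(C)/2$, so it must be compensated by simultaneously re-matching $C_0(M)$, and whether $C_0(M)$ can absorb the change depends on comparing $e(C)/2$ with the number $k$ of $M\cap M_c$-edges on $C_0(M)$. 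When $e(C)/2>k$ no such compensation exists, and one needs a different move altogether (in the paper: re-match $C_0(M)$ entirely into $M_{\bar c}$, partially re-match $C$ so that it becomes the new $C_0$, and pass to a sub-path --- with progress measured by the number of $M_3$-edges on the path, a quantity that does not appear in your lexicographic order). Moreover, you never argue that any of your modifications decreases (iii) while preserving (i), (ii) and the candidate properties, so even the intended contradiction is not set up. As it stands, the proof of (e) --- and with it the lemma --- is missing.
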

\begin{proof}
    We proceed in two steps. First, we show that we can find a pair $(M,P_0)$ satisfying properties~\ref{lem:existence P_0:vertices}--\ref{lem:existence P_0:intervals}, and then we argue that there exists such a pair satisfying \ref{lem:existence P_0:matched edges} as well. 

    For the first step, fix a pair $(M,P)$, where $M$ is an $(a_1,a_2,a_3)$-matching in $G$ with unsaturated vertices $u_1$ and $u_2$ and $P$ is an $M$-alternating path from $u_1$ to a vertex $v\notin V(C_0(M))\cup V(\mathcal{C}(M))$ such that 
    \begin{enumerate}[label=(\arabic*)]
        \item\label{P0:switches} the number of $M_3$-edges in $P$ is minimized among all pairs $(M,P)$;
        \item\label{P0:length} subject to~\ref{P0:switches}, the length of $P$ is minimized.
    \end{enumerate}
    Before we prove that a slight extension of $(M,P)$ satisfies properties~\ref{lem:existence P_0:vertices}--\ref{lem:existence P_0:intervals}, let us show that there exists a pair $(M,P)$ satisfying the above conditions. By assumption, there exists some $(a_1,a_2,a_3)$-matching $M\subseteq G$. Let $u_1,u_2$ be its unsaturated vertices. By \cref{lem:M3edge}, we may assume that~$C_0(M)$ is a cycle in $M_1\cup M_2$. Since $a_3>0$, the matching $M$ contains at least one $M_3$-edge; the endpoints of such an edge are not contained in $V(C_0(M))\cup V(\mathcal{C}(M))$, so $V(G)\setminus \parens*{V(C_0(M))\cup V(\mathcal{C}(M))}$ contains at least one vertex~$v$. By \cref{lem: alternating path_n}, there exists an $M$-alternating path from $u_1$ to $v$, showing that there exists at least one candidate pair $(M,P)$. So, fix a pair $(M,P)$ satisfying conditions~\ref{P0:switches} and~\ref{P0:length}. Next, we prove that we can extend $P$ by one  edge to obtain a path $P'$ such that $(M,P')$ satisfies~\ref{lem:existence P_0:vertices}--\ref{lem:existence P_0:intervals}.

    Write $P = (v_1,\dots, v_{\ell})$. 
    Note that condition~\ref{P0:length} implies that $v_1,v_2,\dots,v_{\ell-1}\in V(C_0(M))\cup V(\mathcal{C}(M))$ as otherwise, we could replace $P$ with a sub-path of itself. Since $v_{\ell-1}\in V(C_0(M))\cup V(\mathcal{C}(M))$ and $v_\ell\notin V(C_0(M))\cup V(\mathcal{C}(M))$, it follows that $v_{\ell-1}v_{\ell}\in M_3\setminus M$. Set $v_{\ell+1} = M(v_{\ell})$ and let $c\in \{1,2\}$ be such that $v_{\ell}v_{\ell+1}\in M\cap M_c$. Define $P' = (v_1,\dots, v_{\ell+1})$, which satisfies property~\ref{lem:existence P_0:last edge} by construction. Observe that, for any saturated vertex $v\in V(C_0(M))\cup V(\mathcal{C}(M))$, we have $M(v)\in V(C_0(M))\cup V(\mathcal{C}(M))$, so $v_{\ell+1}\notin V(C_0(M))\cup V(\mathcal{C}(M))$, as needed. 
    
    Now, we verify property~\ref{lem:existence P_0:C_0}, which also completes the justification of property~\ref{lem:existence P_0:vertices}. Suppose that $P'$ intersects $C_0(M)$ in some vertex $v_i\neq u_1$. Consider the matching $M\cap C_0(M)$, which has size $e(C_0(M))/2-1$ and contains $0\leq k \leq e(C_0(M))/2-1$ edges of $M_1$. By Observation~\ref{obs: matchings of a cycle_n}, there exists another matching $M'$ of $C_0(M)$ of size $e(C_0(M))/2-1$ that also has $k$ edges of $M_1$ but leaves $v_i$ unsaturated. Then, $M'' = (M\setminus C_0(M))\cup M'$ is also an $(a_1,a_2,a_3)$-matching with $C_0(M'') = C_0(M)$ and $\mathcal{C}(M) = \mathcal{C}(M'')$. Moreover, $(v_i,\dots, v_{\ell})$ is a shorter $M''$-alternating path from an unsaturated vertex of $M''$ to a vertex not in $V(C_0(M''))\cup V(\mathcal{C}(M''))$ and with at most as many $M_3$-edges as $P$ since it is a sub-path of $P$. Thus, we get a contradiction to our choice of $(M,P)$.

    It remains to prove that $(M,P')$ also satisfies property~\ref{lem:existence P_0:intervals}. Suppose there is a cycle $C\in \mathcal{C}(M)$ violating \ref{lem:existence P_0:intervals}. Let $v_i$ and $v_j$ be the first and the last vertex of $C$ appearing in $P'$, respectively. Assume that the sub-path $(v_i,\dots, v_j)$ of $P'$ between $v_i$ and $v_j$ is not a sub-path of $C$. Let $L_1$ and~$L_2$ be the two paths between $v_i$ and $v_j$ along $C$. Note that $e(L_1), e(L_2)$, and $j-i$ all have the same parity since $G$ is bipartite. Additionally, $P'$ can only leave $C$ and return to $C$ on $M_3$-edges that are not part of $M$, so the path $(v_i,\dots, v_j)$ contains at least one $M_3$-edge. Since $v_i$ is incident to an edge of $M$ in exactly one of $L_1$ or $L_2$, it follows that either $(v_1,\dots v_i, L_1,v_j,v_{j+1},\dots, v_{\ell})$ or $(v_1,\dots v_i, L_2,v_j,v_{j+1},\dots, v_\ell)$ is an alternating path with fewer $M_3$-edges, again contradicting our choice of $(M,P)$. 

    \medskip
    Thus, we have established the existence of a pair $(M,P')$ satisfying properties~\ref{lem:existence P_0:vertices}--\ref{lem:existence P_0:intervals}. We will show that we can choose a pair $(M,P_0)$ so that property~\ref{lem:existence P_0:matched edges} holds as well. Among all pairs $(M,P_0)$ satisfying properties~\ref{lem:existence P_0:vertices}--\ref{lem:existence P_0:intervals}, where $P_0=(v_1,\dots, v_{\ell+1})$ and $v_\ell v_{\ell+1}\in M\cap M_c$ for some $c\in\{1,2\}$, fix a pair such that 
    \begin{enumerate}[label=(\arabic*)]
    \setcounter{enumi}{2}
        \item\label{P_0:M_3} the number of $M_3$-edges in $P_0$ is minimized;
        \item\label{P_0:length} subject to~\ref{P_0:M_3}, the last edge in $P_0$ from $M\cap M_{\bar{c}}$ appears as early as possible, where $\bar{c}=3-c$ and we consider it the ``earliest'' if there is no such edge at all.
    \end{enumerate}
    Let $C_1,\dots, C_r\in \mathcal{C}(M)$ be the cycles visited by $P_0$, in the order in which they appear, and note that, for each $i\in [r]$, we have $C_i\cap M\subseteq M_c$ or $C_i\cap M\subseteq M_{\bar{c}}$. Note that property~\ref{lem:existence P_0:intervals} implies that all $C_1,\dots, C_r$ are distinct. Moreover, each edge in $P_0$ entering or leaving a cycle $C_i$ is in $M_3$ and, by \ref{lem:existence P_0:vertices} and the fact that $v_\ell v_{\ell+1}\in M\cap M_c$, these are the only $M_3$-edges in $P_0$. It follows that $P_0$ contains exactly $r+1$ edges of $M_3$.

    Let $j\in [r]$ be the largest index such that  $C_j\cap M\subseteq M_{\bar{c}}$. Since $C_j\cap M\cap P_0$ contains at least one edge, the last edge of $P_0$ in $M\cap M_{\bar{c}}$ is on $C_j$. Let $k$ denote the number of edges in $M\cap M_c\cap C_0(M)$, i.e., $k = (e(P_{\bar{c}}(M))-1)/2$ (recall that $C_0(M) = P_{{c}}(M)\cup P_{\bar{c}}(M)$). Then, $M\cap C_0(M)$ contains precisely $e(C_0(M))/2-k-1$ edges of $M_{\bar{c}}$. Next, we show that, by adjusting $M$ on $C_0(M)$ and $C_j$, we can find a new matching and alternating path that contradict the choice of the pair $(M, P_0)$. Let us consider two cases depending on the length of $C_j$.

    \noindent \underline{Case 1}: $e(C_j)/2\leq k$. By \cref{obs: matchings of a cycle_n}, there exists a matching~$M_{C_0}$ in $C_0(M)$ with $e(C_0(M))/2-k-1+e(C_j)/2$ edges of $M_{\bar{c}}$ and $k-e(C_j)/2$ edges of $M_{c}$ that also leaves~$u_1$ unsaturated. Note that~$M_{C_0}$ contains exactly $e(C_0(M))/2-1$ edges. Furthermore, $(M\setminus C_0(M))\cup M_{C_0}$ contains $e(C_j)/2$ more edges of $M_{\bar{c}}$ and $e(C_j)/2$ fewer edges of $M_c$ than $M$.
    Let $M' = ((M\setminus C_0(M))\cup M_{C_0})\Delta C_j$. Then, $M'$ is also an $(a_1,a_2,a_3)$-matching and $P_0\Delta C_j$ is an $M'$-alternating path with the same number of $M_3$-edges but the last edge of $P_0$ in $M'\cap M_{\bar{c}}$ either does not exist or appears before $C_j$, contradicting the choice of $(M,P_0)$.

    \noindent \underline{Case 2}: $e(C_j)/2 > k$. Observe that $(M\setminus C_0(M))\cup (C_0(M)\cap M_{\bar{c}})$ is a perfect matching containing~$k+1$ more edges of $M_{\bar{c}}$ and $k$ fewer edges of $M_c$ than $M$.
    Let $1\leq h\leq \ell$ be maximal such that~$v_h\in V(C_j)$. Let $M_{C_j}$ denote the matching of $C_j$ with $e(C_j)/2-k-1$ edges of $M_{\bar{c}}$ and $k$ edges of~$M_{c}$ such that $v_h$ is unsaturated, which again exists by \cref{obs: matchings of a cycle_n}. Note that $M_{C_j}$ contains $e(C_j)/2-1$ edges.
    Now, consider $M' = (M\setminus (C_0(M)\cup C_j))\cup (C_0(M)\cap M_{\bar{c}})\cup M_{C_j}$. Indeed, $M'$ is an $(a_1,a_2,a_3)$-matching, since the flipped edges on $C_j$ balance out the flipped edges on $C_0(M)$. Furthermore, $C_0(M') = C_j$, $\cal C(M') = \cal C(M)\setminus \set{C_0(M)}\cup \set{C_j}$, and  $v_h$ is unsaturated in $M'$. Setting  $P_0' = (v_h,\dots,v_{\ell+1})$, we get a contradiction to our choice of $(M,P_0)$, as $P_0'$ is an $M'$-alternating path with fewer $M_3$-edges while still satisfying properties \ref{lem:existence P_0:vertices}--\ref{lem:existence P_0:intervals}.
\end{proof}

From \cref{lem:existence P_0} we will now deduce the existence of a very special nearly-$M$-alternating path, along which we will modify our matching.

\begin{lemma}\label{lem:existence P}
    Let $a_1,a_2,a_3\in \mathbb{N}_0$ be integers with $a_1+a_2+a_3 = n-1$ and $a_3>0$. Suppose~$G$ contains an $(a_1,a_2,a_3)$-matching but does not contain an $(a_1+1,a_2,a_3-1)$-matching. Then there exists an $(a_1,a_2,a_3)$-matching $M\subseteq G$ with unsaturated vertices $u_1$ and $u_2$ and a nearly-$M$-alternating $c$-good path $P = (w_1,\dots, w_{k})$ such that:
    \begin{enumerate}[label=(\alph*)]
        \item $w_1=u_2$, $w_h=u_1$ for some even $h$ such that $1 < h < k$, and $M = M(P;1,h)$;\label{existence P-unsaturated}
        \item $P_{1,h}$ is $(M_1,M_2)$-alternating and in particular $f_3(P, M) = 0$;\label{existence P-no M3 between unmatched}
        \item There exists an odd $t$ with $h < t < k-2$ such that $w_{t-1}w_t\in M_3\setminus M$ and $w_tw_k\in  M_{{3-c}}$;\label{existence P-cycle}
        \item The path $(w_1,\dots, w_{t})$ is $M_c$-alternating;\label{existence P-Mc alternating}
        \item The cycle $(w_t,\dots, w_k, w_t)$ is $(M_{3-c},M)$-alternating and contains an edge of $M_3\cap M$.\label{existence P-matched M3 edge}
    \end{enumerate}
\end{lemma}
\begin{proof}
    Suppose that $G$ does not contain an $(a_1+1,a_2,a_3-1)$-matching. Then, by \cref{lem:existence P_0}, there exists an $(a_1,a_2,a_3)$-matching $M\subseteq G$ with unsaturated vertices $u_1$ and~$u_2$ and an $M$-alternating path $P_0\subseteq G$ with $P = (v_1,\dots, v_{\ell+1})$ satisfying \ref{lem:existence P_0:vertices}--\ref{lem:existence P_0:matched edges} such that $C_0 = C_0(M)$ is a cycle in $M_1\cup M_2$ containing the two unsaturated vertices $u_1$ and $u_2$. In particular, property~\ref{lem:existence P_0:matched edges} implies that \mbox{$M\cap P_0 \subseteq M_c$} for some~$c\in\{1,2\}$, and we write $\bar{c} = 3 - c$. Since $P_0$ is $M$-alternating, this implies that $P_0$ is {$M_c$-alternating} as well. Noting that $v_{\ell-1}\in C_0(M)\cup \cal{C}(M)$ whereas $v_\ell\notin C_0(M)\cup \cal{C}(M)$ and $v_\ell v_{\ell+1}\in M_c\cap M$ implies that $v_{\ell-1}v_\ell\in M_3\setminus M$.

Consider now the $(M_c,M)$-alternating path $P_c(M)$ between $u_2$ and $u_1$ in $C_0(M)$ and let $h\ge 1$ denote its number of vertices. Note that $P_c(M)$ contains only edges of $C_0(M)\subseteq M_1\cup M_2$ and both its first and its last edge are in $M_c$. Therefore, $P_c(M)\star P_0$ is $M_c$-alternating.

Let $C$ be the component containing $v_\ell v_{\ell+1}$ in $M_{\bar{c}}\cup M$. Recall that the graph $M_{\bar{c}}\cup M$ contains at most one (non-edge) path component and all other components are either edges of $M_{\bar{c}}\cap M$ or $(M_{\bar{c}}, M)$-alternating cycles and that $P_{\bar{c}}(M) \subseteq C_0(M)$ is the only candidate for a nontrivial path component. We know by~\ref{lem:existence P_0:vertices} that $v_\ell,v_{\ell+1}\notin C_0(M)$. As $v_\ell v_{\ell+1}\in M_c\cap M$, it follows that $C$ must have at least two edges and must therefore be an $(M_{\bar{c}}, M)$-alternating cycle. By \ref{lem:existence P_0:vertices}, we know that~$C\notin \mathcal{C}(M)$ and hence, $C$ contains an edge of~$M\cap M_3$. Moreover, this edge is not incident to the vertex $v_\ell$, since $v_{\ell-1}v_\ell\in M_3$. This in particular implies that the length of $C$ is at least four. Let~$u$ be the second neighbor of~$v_\ell$ in $C$ besides $v_{\ell+1}$, and let $P(v_{\ell+1},u)$ be the path from $v_{\ell+1}$ to $u$ on $C$ avoiding~$v_{\ell}$. Note that~$P(v_{\ell+1},u)$ is $(M_{\bar{c}}, M)$-alternating and contains all edges of $M\cap C$ besides~$v_\ell v_{\ell+1}$; in particular, it contains an edge of $M\cap M_3$. Furthermore, $P_0\star P(v_{\ell+1},u)$ is $M$-alternating and all of its vertices except for $u_1$ are $M$-saturated.

Define $P = P_c(M)\star P_0\star P(v_{\ell+1},u) = (u_2, \ldots, u_1=v_1, \ldots, v_{\ell+1}, \ldots, u)$; write $P = (w_1,\dots, w_k)$.  Since $P_c(M)\star P_0$ is $M_c$-alternating with its last edge $v_\ell v_{\ell+1}$ in $M_c$ and $P(v_{\ell+1},u)$ is $M_{\bar{c}}$-alternating with its first edge in $M_{\bar{c}}$, it follows that $P$ is $c$-good. Furthermore,  $P_c(M)$ and $P_0\star P(v_{\ell+1},u)$ are $M$-alternating with unsaturated vertices $u_1$ and $u_2$ and the length $h-1$ of $P_c(M)$ is odd. Therefore, $P$ is nearly-$M$-alternating with $w_1=u_2$, $w_h=u_1$, and $M=M(P; 1,h)$, proving~\ref{existence P-unsaturated}. Since $P_c(M)\subseteq C_0(M)$, which is an $(M_1, M_2)$-alternating cycle, it follows that $P_c(M) = P_{1,h}$ is itself $(M_1,M_2)$-alternating and in particular $f_3(P,M) = 0$, establishing~\ref{existence P-no M3 between unmatched}. Since $C$ is $(M_{\bar{c}}, M)$-alternating and $v_\ell v_{\ell+1}\in M$ (by~\ref{lem:existence P_0:last edge}), we know that $v_\ell u\in M_{\bar{c}}$; so we let $t\in [k]$ be the index such that $v_\ell=w_t$. Since $C$ is an even cycle of length at least four, it follows that $t$ is odd and $t\leq k-3$. As explained earlier, $v_{\ell-1}v_\ell \in M_3\setminus M$, yielding~\ref{existence P-cycle}. Property~\ref{existence P-Mc alternating} follows from the fact that $P_c(M)\star P_0$ is $M_c$-alternating. Finally, $P(v_{\ell+1},u)$ contains an edge of $M_3\cap M$ and hence so does $(w_t,\dots, w_k)$, from which~\ref{existence P-matched M3 edge} follows. 
\end{proof}

\subsection{Proof of the Switching Lemma}

We are now ready to complete the proof of the Switching Lemma.
\begin{proof}[Proof of \cref{lem:main}]
Suppose that $G$ does not contain an $(a_1+1,a_2,a_3-1)$-matching. Then, by \cref{lem:existence P}, there exists an $(a_1,a_2,a_3)$-matching $M\subseteq G$ with unsaturated vertices $u_1$ and~$u_2$ and a nearly-$M$-alternating $c$-good path $P\subseteq G$ satisfying~\ref{existence P-unsaturated}--\ref{existence P-matched M3 edge}. Let $\bar{c} = 3-c$.
Write $P = (w_1, \ldots, w_k)$, and note that \cref{obs:unsaturated-positions} implies that $k$ is even; further, let $h$ be such that $w_1=u_2$, $w_h=u_1$, and $M = M(P;1,h)$, as in~\ref{existence P-unsaturated}.  By~\ref{existence P-cycle}, we know that $w_tw_k\in M_{\bar{c}}$ for some odd index~$t$; let $C$ denote the cycle $(w_t,\dots, w_k,w_t)$. By~\ref{existence P-matched M3 edge}, $C$ is an $(M_{\bar{c}},M)$-alternating cycle containing an edge of $M_3\cap M$.

Now, we apply \cref{lem: can move_n} repeatedly to $M$ and~$P$ until this is no longer possible; let $M(P;i',r)$ be the final matching. Note that, for us to not be able to apply \cref{lem: can move_n} to this matching, we must have $r>k-2$, or $i'+1=r$ and $v_{i'}v_r\in M_3$. In the latter case, $f_3(P,M(P;i',r))=-1$, and thus, by \cref{obs:changes} and the fact that $f_3(P,M) = 0$, we have $a_3(M(P;i',r)) = a_3-1$. We also know that~$a_2(M(P;i',r))=a_2$, and therefore, $M(P;i',r)$ is an $(a_1+1,a_2,a_3-1)$-matching, contradicting our assumption. Hence, $r>k-2$ and, because $k$ and $r$ are even (by \cref{obs:unsaturated-positions}), it follows that $r=k$ and~$a_2(M(P;i',k)) = a_2$.
 Let~$i$ be maximal such that $a_2(M(P;i,k)) = a_2$, and write $M_{i,k} = M(P;i,k)$. If $a_3(M_{i,k}) < a_3$, then applying \cref{lem: intermediate value_n} to $M = M(P;1,h)$, $P$, and~$M_{i,k}$ yields the existence of an $(a_1+1,a_2,a_3-1)$-matching, which is again a contradiction. So, we may assume that $a_3(M_{i,k})\geq a_3$. 
We consider two cases, depending on whether or not $w_i$ is in~$C$.

\smallskip 

\noindent\underline{Case 1}: $w_i\in C$. Since $C\setminus M\subseteq M_{\bar c}$, we have  $\size{P_{i,k}\cap M_3\cap M_{i,k}} = 0$ and therefore, $f_3(P,M_{i,k}) \leq 0$. 
If~$f_3(P,M_{i,k}) < 0$, then by \cref{obs:changes} it follows that $a_3(M_{i,k})< a_3$, a contradiction. Thus~$f_3(P,M_{i,k}) = 0$ and hence $a_3(M_{i,k}) = a_3$ by \cref{obs:changes}; it follows that $M_{i,k}$ is an $(a_1,a_2,a_3)$-matching of~$G$ with unsaturated vertices $w_i,w_k\in V(C)$. We will now show that, in the context of \cref{lem:M3edge} for $M_{i,k}$, we have $C_0(M_{i,k}) = C$. Since $C$ is not an $(M_1, M_2)$-alternating cycle by \cref{lem:existence P}\ref{existence P-matched M3 edge}, we obtain a contradiction. Indeed, using again the fact that~$C$ is an~$(M_{\bar{c}},M)$-alternating cycle (by~\ref{existence P-matched M3 edge}) and $f_3(P,M_{i,k})=0$, we deduce that $P_{i,k}$ contains no edges of~$M_3$ and is therefore the $(M_c,M_{i,k})$-alternating path $P_c(M_{i,k})$ between the unsaturated vertices~$w_i$ and $w_k$. Again by~\ref{existence P-matched M3 edge} and the fact that $M$ and $M_{i,k}$ agree outside of $P_{i,k}$, it follows that the path~$(w_k, w_{t}, w_{t+1},\dots, w_i)$ is the $(M_{\bar{c}}, M_{i,k})$-alternating path $P_{\bar{c}}(M_{i,k})$. Therefore $C = C_0(M_{i,k})$, as claimed.

\smallskip

\noindent\underline{Case 2}:  $w_i\notin C$, that is, $i\leq t-1$. Note that, since $i$ and $t$ are both odd, we have $i\leq t-2$. Hence~$w_t$ is saturated in $M_{i,k}$ by the $M_3$-edge $w_{t-1}w_t$.
First we construct a matching $M^\ast$ of size $n-1$ such that $a_2(M^\ast) = a_2(M_{i,k}) = a_2$ and $a_3(M^\ast) = a_3(M_{i,k}) -1\geq a_3-1$. 
If $c=2$, then $w_kw_t \in M_1$ and the matching $M^\ast :=M_{i,k} \setminus \{w_{t-1}w_{t}\} \cup \{w_kw_t\}$ has indeed one less $M_3$-edge and one more $M_1$-edge than $M_{i,k}$.

Suppose now that $c=1$. In this case $w_kw_t\in M_2\setminus M_{i,k}$. Since $w_i$ is $M_{i,k}$-unsaturated, $(w_1, \ldots , w_t)$ is $M_1$-alternating (by~\ref{existence P-Mc alternating}), and $i$ is odd, it follows that 
$w_iw_{i+1}\in M_1\setminus M_{i,k}$. The next edge, $w_{i+1}w_{i+2}$, is then contained in $M_{i,k}$, as well as $M_2$, as otherwise $M(P;i+2, k)$ would have the same number of $M_2$ edges as $M_{i,k}$, which would contradict the maximality of $i$. 
Then the matching $M^\ast = M_{i,k} \setminus \{w_{t-1}w_t, w_{i+1}w_{i+2}\} \cup \{w_kw_t, w_{i}w_{i+1}\}$ satisfies the desired properties.

Now, since in both cases
$a_3(M^\ast) = a_3(M_{i,k}) -1 \geq a_3 -1$ and $a_2(M^\ast) = a_2$, we can assume that $a_3(M^\ast) \geq a_3$, as otherwise $M^\ast$ represents a contradiction to the assumption that there is no $(a_1+1, a_2, a_3-1)$-matching. Consequently, while the matching $M^\ast$ has the correct number of $M_2$-edges, it still has more than the desired number of $M_3$-edges. To find an $(a_1+1,a_2,a_3-1)$-matching and obtain a final contradiction, we plan to use our Intermediate Value Lemma (\cref{lem: intermediate value_n}) again. For this, we first need another path $P'$ that is nearly-$M^\ast$-alternating: recall that $P$ does not contain the edge $w_kw_t$, which saturates the vertex $w_t$ in $M^{\ast}$, and is therefore not nearly-$M^\ast$-alternating. To remedy the situation, we simply ``cut'' $P$ and set $P' : = (w_1, \ldots , w_{t-1})$, which fits the bill.

To conclude the proof using \cref{lem: intermediate value_n}, we just need to construct another matching $\tilde{M}$ of size~$n-1$ such that $P'$ is nearly-$\tilde{M}$-alternating, $M^\ast \setminus P' = \tilde{M}\setminus P'$, $a_2(\tilde{M}) = a_2$, and $a_3(\tilde{M}) \leq a_3 -1$. 
The matching $M^{\Delta} := M\Delta C$ of size $n-1$ agrees with $M^\ast$ outside of $P'$ and therefore $M^\ast(P';i,j) = M^\Delta(P';i,j)$ for all suitable indices $i,j$; further, $P'$ is nearly-$M^\Delta$-alternating with unsaturated vertices $w_1$ and $w_h$, so $M^\Delta = M^\Delta(P';1,h)$.  
By shifting the smaller unsaturated vertex iteratively by two, one arrives at 
the matching $M^{\Delta}(P';h-1,h)$. All matchings during this procedure have the same number of $M_3$-edges, as the path $P_{1,h}$ between $w_1$ and $w_{h}$ is $(M_1,M_2)$- alternating (by property~\ref{existence P-no M3 between unmatched}). It also follows that the number of $M_2$-edges changes by exactly one in each step of this process. Hence, if we manage to show that $a_2(M^*)=a_2$ is between $a_2(M^\Delta)$ and~$a_2(M^\Delta(P';h-1,h))$, then we will also have shown that there exists a shift $M^\Delta(P';2h'+1,h) =: \tilde{M}$ whose number of $M_2$-edges is exactly $a_2$. Note also that, as $C$ contains an edge of $M_3\cap M$ by property~\ref{existence P-matched M3 edge}, we have~$a_3 (M^\Delta) < a_3$. Thus, $a_3(\tilde{M}) = a_3 (M^\Delta) < a_3 $, so we can apply the Intermediate Value Lemma to~$M^\ast$, $\tilde{M}$, and $P'$ to obtain our desired matching.  
We will derive the desired relation between the $a_2$-values using \cref{obs:changes}, which states that it is sufficient to derive the same inequalities for the $f_2$-function (in absolute value). In other words, we will show that
\begin{align}
\size{f_2(P', M^{\Delta}(P';1,h))}\geq \size{f_2(P', M^\ast)}\geq \size{f_2(P', M^{\Delta}(P';h-1,h)}.\label{f_2-interpolate}
\end{align}

We first claim that $\size{f_2(P', M)}  \geq \size{f_2(P', M^\ast)}$; since $f_2(P', M^{\Delta}(P';1,h)) = f_2(P', M(P';1,h)) = f_2(P', M)$, the first inequality in~\eqref{f_2-interpolate} follows immediately from this.
\cref{obs:changes}  applied to the matchings $M^{\Delta}$ and $M^\ast$ and the facts that $a_2(M^\ast) = a_2$ and $f_2(P', M^{\Delta}) = f_2(P', M)$ imply that
\begin{align*}
    a_2 &= a_2(M^\ast) = a_2(M^\Delta) - f_2(P', M^\Delta) + f_2(P', M^\ast)\\
    &= a_2 + (-1)^{c+1}\cdot \size{C \cap M_2} - f_2(P', M) + f_2(P', M^\ast),
\end{align*}
where the second line follows from the fact that, if $c=1$, then $a_2(M^\Delta) = a_2(M) + \size{C\cap M_2}$, while if $c=2$, then $a_2(M^\Delta) = a_2(M) - \size{C\cap M_2}$. Hence,
\begin{align*}
    f_2(P', M) = (-1)^{c+1}\cdot \size{C \cap M_2} + f_2(P', M^\ast). 
\end{align*}
Note that, since $P'$ is a good path, by \cref{obs:f-good-paths} the sign of $f_2(P', M^\ast)$ is the same as that of $f_2(P', M^{\Delta}) = f_2(P', M)$.
Observe further that $M\cap P_{1,h} \subseteq M_{\bar{c}}$, so $f_2(P', M) = (-1)^{c+1}\size{P_{1,h}\cap M_2}$, that is, all of $f_2(P', M), (-1)^{c+1}\size{C\cap M_2},$ and $f_2(P', M^\ast)$ have the same sign. Therefore
\begin{align*}
    \size{f_2(P', M)} =  \size{C \cap M_2} + \size{f_2(P', M^\ast)} \geq \size{f_2(P', M^\ast)}, 
\end{align*}
as claimed.

We now turn our attention to the second inequality in~\eqref{f_2-interpolate}. If $c=1$, the inequality is easy as~{$f_2(P', M^\Delta(P';h-1,h))=0$}. If on the other hand $c=2$, then $f_2(P', M^\ast)<0$, since $M^\ast$ leaves~$w_i$ and~$w_{t-1}$ unsaturated and $w_{t-2}w_{t-1}\in M_2$; in particular, this implies that $\size{f_2(P', M^\ast)}\geq 1$. Combined with the fact that $f_2(P', M^{\Delta}(P';h-1,h)) = 1$, this completes the verification of~\eqref{f_2-interpolate}.

\medskip
For the final step, recall that, by property~\ref{existence P-no M3 between unmatched},  for every integer $0\leq h' \leq h/2-1$, we have 
\begin{align*}
    \size{f_2(P', M^\Delta(P';2h'+1,h))} = \size{f_2(P', M^\Delta)} - h', 
\end{align*}
and that the signs of the evaluations of $f_2$ are the same since $P'$ is a good path. 
As a consequence, there exists some $0\leq h'\leq h/2-1$ such that $f_2(P', M^\Delta(P';2h'+1,h)) =  f_2(P', M^\ast)$. As argued earlier using \cref{obs:changes}, we have $a_2(M^\Delta(P';2h'+1,h)) = a_2(M^\ast) = a_2$ and a final application of \cref{lem: intermediate value_n} to $M^\ast, M^\Delta(P';2h'+1,h)$ and $ P'$ yields the existence of an $(a_1+1,a_2,a_3-1)$-matching. 
\end{proof}

\section{Concluding remarks}

In Theorem~\ref{thm:main} we required that the three matchings are disjoint. It is natural to wonder what happens if they are allowed to overlap. In fact, we know from~\cite{anastos2023splitting} that, when $a_1+a_2+a_3\leq n-2$, we can guarantee an $(a_1,a_2,a_3)$-matching even if the matchings are not disjoint (and the graph is not necessarily bipartite).
It turns out that the proof of our main theorem can be modified to include the case of multigraphs, i.e., when the three matchings $M_1$, $M_2$, and $M_3$ are not necessarily edge-disjoint. Hence, we obtain the following stronger result.  
\begin{theorem}\label{thm:main-multi}
    Let $G$ be a bipartite multigraph on $2n$ vertices which is the  union of three perfect matchings $M_1$, $M_2$, and $M_3$.
    Then, for any integers $a_1,a_2,a_3\in \mathbb{N}_0$ satisfying  $a_1 + a_2 + a_3 = n-1$,  the graph $G$ contains an $(a_1,a_2,a_3)$-matching.
\end{theorem}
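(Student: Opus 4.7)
The plan is to revisit the proof of \cref{thm:main} and verify that it extends, with only cosmetic modifications, to the multigraph setting. Throughout, a matching $M$ in $G$ is interpreted as a submultiset of \emph{labeled} edges, where each edge carries a label recording which of $M_1, M_2, M_3$ it came from; the quantity $\size{M\cap M_i}$ then counts those chosen edges that carry label~$i$. With this convention, the only new phenomenon is that two of the $M_i$'s may share an edge $uv$, producing a ``degenerate'' $2$-cycle in their union: two parallel edges between $u$ and $v$ carrying the two labels.

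The first step is to handle the Reduction Lemma. \cref{lem:reduction-strengthening} already permits components of length at least two in $(M_1\cup M_2)-V(M)$, so $2$-cycles fit the syntactic framework. The one place where their presence could cause trouble is the base case ``single cycle of length $2m'$'' in the proof, but there the subcase $b_1'<m'$ simply cannot arise when $m'=1$: the inequality $2b_1'\geq b_1'+b_2'\geq m'$ from~\eqref{eq:b_1'b_2'} forces $b_1'\geq 1$, and we automatically fall into the easy branch of adding the $M_1$-copy of the double edge. The inductive step is unaffected, since adding the $M_1$-edges of a shortest cycle to $M$ makes sense for length-$2$ cycles too (we add a single $M_1$-edge). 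Hence \cref{lem:reduction} extends verbatim.

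Next, the auxiliary results \cref{def:nearly-alternating}, \cref{lem: can move_n}, \cref{obs:changes}, \cref{obs:f-good-paths}, \cref{lem: intermediate value_n}, \cref{lem:M3edge}, and \cref{lem: alternating path_n} are purely combinatorial manipulations of alternating paths and symmetric differences; they are insensitive to edge-disjointness and transfer unchanged. In particular, the $3$-regularity used in \cref{lem: alternating path_n} should be read as multigraph-degree, and the counting argument ``at most $3|A'|-1$ edges between $A'$ and $B'$'' remains valid when edges are counted with multiplicity. The one point that genuinely requires attention is \cref{lem:existence P_0}, because $\mathcal{C}(M)$ and $C_0(M)$ may now include $2$-cycles formed by shared $M_1, M_2$-edges. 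The crucial \cref{obs: matchings of a cycle_n} is still valid for a length-$2$ cycle with $k=0$: take the empty matching on the cycle, which trivially leaves both of its vertices unsaturated. Each invocation of this observation inside the proof of \cref{lem:existence P_0} should be checked against this degenerate case, but in each instance the corresponding ``flip on a $2$-cycle'' does nothing and the structural argument simplifies rather than breaks.

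The main obstacle is not a new idea but careful bookkeeping in the proof of \cref{lem:main} itself, where the path $P'$ and the matchings $M^\ast$, $M^\Delta$, and $M_{2h'+1,h}(P',M^\Delta)$ are juggled simultaneously. When the cycle $C$ in $M_{\bar c}\cup M$, or one of the cycles in $\mathcal{C}(M)$, is a $2$-cycle, one must verify that the definition of $f_c$ consistently counts labeled edges, and that the sign-tracking behind~\eqref{f_2-interpolate} survives; this amounts to ensuring that for each doubled edge the proof commits to a single label throughout each application of \cref{lem: can move_n} and \cref{lem: intermediate value_n}. Once these verifications are completed, the same chain of deductions produces an $(a_1+1,a_2,a_3-1)$-matching from any $(a_1,a_2,a_3)$-matching with $a_3\geq 1$, and the induction on connected components from \cref{sec:main-proof} then yields \cref{thm:main-multi} without further change.
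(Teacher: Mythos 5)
Your plan---revisit the proof of \cref{thm:main} and check the places where a $2$-cycle can appear---is exactly what the paper does, and your treatment of the Reduction Lemma matches the paper's remark: when $m'=\ell=1$, \eqref{eq:b_1'b_2'} forces $b_1'\geq m'$, so the easy branch applies. However, your handling of the Switching Lemma has a real gap: you assert that when $C_0(M)$ or the cycle $C$ containing $v_\ell v_{\ell+1}$ is a $2$-cycle ``the structural argument simplifies rather than breaks,'' but it does not simplify for $C$---it would simply fail. If $C$ were a $2$-cycle on $\{v_\ell, v_{\ell+1}\}$, then $u=v_{\ell+1}$, the path $P(v_{\ell+1},u)$ is empty, and, more importantly, $C$ would contain no edge of $M_3$ (its two edges are $v_\ell v_{\ell+1}\in M\cap M_c$ and the parallel $M_{\bar c}$-edge), contradicting the deduction from property~\ref{lem:existence P_0:vertices} that $C\notin\mathcal{C}(M)$ and hence contains an $M\cap M_3$-edge. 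So the right move is to \emph{rule out} this degeneracy rather than accommodate it; the paper observes that $C\notin\mathcal{C}(M)$ forces an $M_3$-edge on $C$, which is incompatible with $v_{\ell-1}v_\ell\in M_3$ and $M_3$ being a matching.

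You similarly miss the paper's observation for $C_0(M)$: when case~\ref{lem:M3edge:none} of \cref{lem:M3edge} holds and $C_0(M)$ is a degenerate $2$-cycle, there is an immediate shortcut (add the $M_1$-parallel edge $u_1u_2$ to $M$ and delete an arbitrary $M_3$-edge to get the $(a_1+1,a_2,a_3-1)$-matching), so this case never has to be pushed through the main argument. These two specific facts---$C_0(M)$ degenerate $\Rightarrow$ done, and $C$ degenerate $\Rightarrow$ impossible---are the content of the paper's extension to multigraphs; without them, ``careful bookkeeping'' and ``committing to a single label'' are not enough to carry the Switching Lemma through. The observation that cycles of $\mathcal{C}(M)$ themselves may be degenerate without harm, which you do make, is correct.
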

As the proof of our main result is already fairly technical for simple graphs, we chose to present only the key modifications necessary to handle the multigraph case. 
The obvious, yet important, distinction between the two is whether the union of two matchings can contain a component which is a {single edge}. In the multigraph setting this will often be thought of as a (degenerate) cycle of length two. 
In the proof of \cref{lem:reduction-strengthening}, in the base case we need to consider the situation where $m'=\ell=1$. In this case though, since $2b_1'\geq m'$ by~\eqref{eq:b_1'b_2'}, it follows that we always have $b_1'\geq m'$ and we can proceed as before.
It is also important to observe that, if 
\cref{lem:M3edge} is not true and hence~$C_0(M)$ is a cycle in $M_1\cup M_2$, this cycle cannot be degenerate. Indeed, if the vertices $u_1$ and $u_2$ unsaturated by $M$ are connected by an $M_1$-edge, then we can simply add this edge to the matching and remove an arbitrary $M_3$-edge to obtain an $(a_1+1,a_2,a_3-1)$-matching. Similarly, the cycle $C$ in the proof of our Switching Lemma (specifically in \cref{lem:existence P}\ref{existence P-matched M3 edge} and \cref{lem:main}) cannot be degenerate. This is because $C\notin\mathcal{C}(M)$, implying that $C$ contains an $M_3$-edge, but $v_{\ell-1}v_{\ell}\in M_3$ and $M_3$ is a matching. Note that the cycles in $\mathcal{C}(M)$ may in fact be degenerate, but this does not affect the argument.

\subsection{Open problems}
It would be very interesting to extend our results to any $k\geq 4$ nonzero color-multiplicities.  
As the $k=n-1$ case would imply Montgomery's Theorem, resolving the problem in full generality is not expected to be easy --- unless of course a relatively simple direct reduction can be found. 
Tackling the  case $k=4$ fully already seems to require novel ideas.  

\begin{problem}
Let $G$ be a bipartite graph on $2n$ vertices whose edge set is decomposed into perfect matchings $M_1$, $M_2$, $M_3$, and $M_4$. Let $a_1, a_2, a_3, a_4 \in \mathbb{N}_0$ be  integers such that $a_1+a_2+a_3+a_4 = n-1$.  Then there exists a matching $M$ in $G$ such that $|M\cap M_i| =a_i$ for $i=1,2,3,$ and $4.$
\end{problem}
As it was already mentioned in \cite{anastos2023splitting}, it would also be worth investigating whether allowing the matchings to overlap changes the problem for larger $k$. In light of \cref{thm:main-multi} we tend to believe that the answer is no.

\medskip
A simpler task would be to show that the constant $1$ in \cref{con:AFMS} can be replaced by some function depending only on $k$. 

\begin{problem}
    Determine the smallest function $f:\mathbb{N} \to \mathbb{R}_{\geq 1}$ such that the following is true. Let $n$ be a (sufficiently large) integer, $G$ be a bipartite graph on $2n$ vertices that is the union of $k$ perfect matchings $M_1,\dots, M_k$, and $a_1,\dots, a_k\in  \mathbb{N}_0$ be integers summing up to $n-f(k)$. Then $G$ contains a matching $M$ such that $\size{M\cap M_i} = a_i$ for all $i\in [k]$. 
\end{problem}

\medskip
In another direction, it would be interesting to investigate whether \cref{thm:main} is true under weaker conditions. As discussed in~\cite[Remark~2]{anastos2023splitting}, if $n$ is even, $G$ is the disjoint union of $n/2$ copies of $K_4$, and $a_1,a_2,a_3$ are all odd and $a_1+a_2+a_3=n-1$, then $G$ has no $(a_1,a_2,a_3)$-matching. However, this is the only obstruction we are aware of. Taking $G$ to be bipartite automatically eliminates this example, but can we relax this assumption further? We believe that this should be the case and reiterate a conjecture of Anastos, Fabian, M\"uyesser, and Szab\'o~\cite[Conjecture~3]{anastos2023splitting}.

\begin{conjecture}[{\cite{anastos2023splitting}}]
    Let $G$ be a graph on $2n$ vertices which is the union of three disjoint perfect matchings, and suppose that $G$ has a component that is not isomorphic to $K_4$.
    Then, for any integers $a_1,a_2,a_3\in \mathbb{N}_0$ satisfying  $a_1 + a_2 + a_3 = n-1$, $G$ contains an $(a_1,a_2,a_3)$-matching.
\end{conjecture}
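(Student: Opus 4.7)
The plan is to combine a non-bipartite refinement of the Reduction Lemma with a careful analysis of how $K_4$ components constrain the global color budget. A perfect matching of $K_4$ has color distribution $(2,0,0)$, $(0,2,0)$, or $(0,0,2)$, so each $K_4$ component is forced to commit both its matched edges to a single color. I would first attempt to prove a ``component lemma'' stating that, for every connected non-$K_4$ graph $F$ on $2m$ vertices which is the union of three disjoint perfect matchings and every $(b_1,b_2,b_3)\in\mathbb{N}_0^3$ with $b_1+b_2+b_3\in\{m-1,m\}$, there is a matching of $F$ realizing that color distribution. This is the non-bipartite analogue of \cref{thm:main} in the connected setting together with the non-bipartite analogue of \cref{lem:reduction}.

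Granting the component lemma, the proof finishes by distributing $(a_1,a_2,a_3)$ across components. Let $r$ be the number of $K_4$ components and $F_1,\dots,F_s$ (with $s\ge 1$) the non-$K_4$ components, of orders $2m_1,\dots,2m_s$. I would assign a color $c_i\in\{1,2,3\}$ to each $K_4$ and residual budgets $(b_{1,j},b_{2,j},b_{3,j})$ to each $F_j$ so that the totals equal $(a_1,a_2,a_3)$ and exactly one $F_{j_0}$ has budget summing to $m_{j_0}-1$, with every other $F_j$ summing to $m_j$. A greedy or flow argument should produce a feasible assignment whenever $s\ge 1$, since the flexibility of a non-$K_4$ component can absorb any parity or divisibility constraint imposed by the $K_4$'s; in particular the obstruction noted in~\cite{anastos2023splitting} (all $a_i$ odd on a disjoint union of $K_4$'s) is defused by allocating an odd-sum budget to the non-$K_4$ component.

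The main obstacle is the component lemma. The proof of \cref{lem:reduction-strengthening} leverages bipartiteness in verifying that a certain endpoint index $k$ is even, ensuring that $v_k$ has the right matching partners for the augmentation; and the proof of \cref{lem:main} uses similar parity bookkeeping throughout, particularly in the interaction of good nearly-$M$-alternating paths with $\mathcal{C}(M)$ and $C_0(M)$. In the non-bipartite setting, $M_i\cup M_j$ is still a disjoint union of even cycles (being a proper $2$-edge-coloring of a $2$-regular graph), so the cycle-based skeleton of \cref{sec:switch} survives; the new phenomenon is that alternating walks can close into odd cycles of $G$ through $M_3$-edges, and every odd cycle of $G$ necessarily uses all three colors. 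I would rework the good path framework by allowing the path to detour through an odd cycle before re-entering the augmenting structure, and compensate for the resulting $f_3$-discrepancy using a new local exchange that re-balances $M_3$-edges across the odd cycle.

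As a sanity check on the component lemma, I would verify the statement by hand on the smallest connected cubic $3$-edge-colored non-bipartite graphs (such as the triangular prism and the Möbius-Kantor graph) to confirm that $K_4$ is indeed the only local obstruction, and use these as base cases for an induction on $m$.
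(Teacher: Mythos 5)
This statement is an \emph{open conjecture}, restated in the paper's concluding remarks from~\cite{anastos2023splitting}; the paper does not prove it, and its main result (\cref{thm:main}) handles only the bipartite case. So there is no ``paper's own proof'' to compare against; I can only assess the proposal on its merits, and there are two substantive problems.

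The central ``component lemma'' you posit is false as stated. You want: for every connected non-$K_4$ graph $F$ on $2m$ vertices that is a union of three disjoint perfect matchings and every budget $(b_1,b_2,b_3)$ with $b_1+b_2+b_3\in\{m-1,m\}$, there is a matching realizing that budget \emph{exactly}. Take $F=K_{3,3}$ with its unique (up to relabeling) proper $3$-edge-coloring; the only perfect matchings are the three color classes and the three ``Latin-square transversal'' matchings, so the realizable perfect-matching distributions are $(3,0,0),(0,3,0),(0,0,3),(1,1,1)$ and nothing else. In particular $(2,1,0)$ (which sums to $m=3$) is unrealizable, even though $K_{3,3}$ is connected, bipartite, and not $K_4$. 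This is exactly why the paper's \cref{lem:reduction} is phrased with ``\emph{at most} $b_i$ edges of $M_i$'' for the perfect-matching components, and why the global bookkeeping in the proof of \cref{thm:main} passes a \emph{residual} budget to the remaining components rather than prescribing exact budgets per component. Your budget-distribution step — forcing every non-$K_4$ component except one to hit an exact distribution summing to its own $m_j$ — inherits this falsity; the correct framework needs the one-sided bound on all perfect-matching components and exactness only in the single size-$(m-1)$ component, and even then the feasibility of the allocation in the presence of $K_4$'s (each of which absorbs exactly two units from a single color) needs a careful parity argument that the sketch does not supply.

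Two smaller issues. First, the Möbius–Kantor graph is bipartite (girth $6$), so it cannot serve as a non-bipartite sanity check; the triangular prism and the Petersen graph minus a perfect matching are better small test cases. Second, and more importantly, the paragraph on adapting \cref{sec:switch} to the non-bipartite setting is where the real content would have to go, and it is entirely speculative: the claim that ``alternating walks can close into odd cycles\ldots and every odd cycle of $G$ necessarily uses all three colors'' is true, but the proposed fix — ``detour through an odd cycle\ldots and compensate for the resulting $f_3$-discrepancy'' — is not a mechanism, it is a wish. The bipartiteness in the Switching Lemma is used in a load-bearing way (parity of the endpoints of alternating paths, the evenness of the $(M_1,M_2)$-cycles in $\mathcal{C}(M)$ and $C_0(M)$, the identity $M=M_{i,j}(P,M)$ with $i$ odd and $j$ even, etc.), and it is genuinely unclear that the same augmenting-path calculus survives. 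Until a concrete substitute for those parity arguments is written down, this remains an interesting but unproven plan.
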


\section*{Acknowledgments} 
We would like to thank Clément Requilé for many helpful discussions.\\
\noindent(SB) Most of this research was conducted while the author was at the School of Mathematics, University of Birmingham, Birmingham, United Kingdom. The research leading to these results was supported by EPSRC, grant no.\ EP/V048287/1 and by ERC Advanced Grants ``GeoScape'', no.\ 882971 and ``ERMiD'', no.\ 101054936. There are no additional data beyond that contained within the main manuscript. \\
(MC) The research leading to these results was supported by the SNSF Ambizione Grant No. 216071.\\

\bibliographystyle{amsplain}
\bibliography{biblio}
\end{document}